\documentclass[11pt]{amsart}
\usepackage{amssymb,amsthm}
\usepackage[all]{xy}
\usepackage{comment}

\addtolength{\evensidemargin}{-.4in}
\addtolength{\oddsidemargin}{-.4in}
\addtolength{\textwidth}{.8in}

\numberwithin{equation}{section}
\newtheorem{thm}[equation]{Theorem}
\newtheorem{lem}[equation]{Lemma}
\newtheorem{prop}[equation]{Proposition}
\newtheorem{cor}[equation]{Corollary}

\theoremstyle{remark}
\newtheorem{rem}[equation]{Remark}

\theoremstyle{definition}
\newtheorem{defn}[equation]{Definition}

\newenvironment{changemargin}[2]{%
  \begin{list}{}{%
    \setlength{\topsep}{0pt}%
    \setlength{\leftmargin}{#1}%
    \setlength{\rightmargin}{#2}%
    \setlength{\listparindent}{\parindent}%
    \setlength{\itemindent}{\parindent}%
    \setlength{\parsep}{\parskip}%
  }%
  \item[]}{\end{list}}


\DeclareMathOperator{\coh}{H}
\DeclareMathOperator{\Hom}{Hom}

\DeclareMathOperator{\Ext}{Ext}

\DeclareMathOperator{\Irr}{Irr}

\DeclareMathOperator{\Proj}{Proj}

\DeclareMathOperator{\Ind}{ind}

\DeclareMathOperator{\Thick}{Thick}

\DeclareMathOperator{\stmod}{stmod}
\DeclareMathOperator{\Spec}{Spec}

\DeclareMathOperator{\Coind}{coind}

\newcommand{\ul}{\underline{\lambda}}

\newcommand{\Z}{\mathbb{Z}}

\newcommand{\ot}{\otimes}
\newcommand{\ua}{\uparrow}
\newcommand{\da}{\downarrow} 
\newcommand{\cF}{\mathfrak F}
\newcommand{\cA}{\mathcal A}
\newcommand{\cB}{\mathcal B}
\newcommand{\cC}{\mathcal C}
\newcommand{\bP}{\mathbb P}

\newcommand{\DOT}{\setlength{\unitlength}{1pt}\begin{picture}(2.5,2)(1,1)\put(2,2.5){\circle*{2}}\end{picture}} 

\newcommand{\fu}{\mathfrak u}

\newcommand{\fsl} {\mathfrak {sl}}

\title[Tensor ideals and varieties]{Tensor ideals and varieties for 
modules of quantum elementary abelian groups}
\author{Julia Pevtsova}
\address{Department of Mathematics, University of Washington, Seattle, WA
98195, USA}
\email{julia@math.washington.edu}
\thanks{This material is based upon work supported by the National Science
Foundation under grant No.\ 0932078000, while the second author was in 
residence at the Mathematical Sciences Research Institute (MSRI) in Berkeley,
California, during the semester of Spring 2013. The first author was
supported by NSF grant DMS-0953011, and 
the second author
by NSF grant DMS-1101399.}
\author{Sarah Witherspoon}
\address{Department of Mathematics, Texas A\&M University, College Station,
TX 77843, USA}
\email{sjw@math.tamu.edu}
\subjclass[2010]{16E40,16T05,18D10}

\date{January 26, 2014}

\begin{document}

\begin{abstract} 
In a previous paper we constructed rank and support variety theories 
for   ``quantum elementary abelian groups,"
that is, tensor products of copies of Taft algebras. 
In this paper we use both variety theories to 
classify the thick tensor ideals in the stable module
category,  and to  prove a tensor product property for the support varieties. 
\end{abstract}

\maketitle
\section{Introduction} 

This paper is a sequel to our study of rank and support varieties for ``quantum elementary abelian groups" \cite{PvW09}.  
These are arguably the simplest examples of  finite dimensional non-commutative non-cocommutative Hopf algebras. They come up in many different contexts and have several incarnations. For the purposes of this paper, let $\ell\geq 2$ be an
integer and define A to be a semi-direct (or smash) product 
of  a truncated polynomial algebra with an abelian group,  
\[A := k[X_1, \ldots, X_n]/(X_1^\ell, \ldots, X_n^\ell) \rtimes (\Z/\ell\Z)^{\times n}, \]
where the characteristic of $k$ is either zero or relatively prime to $\ell$
(details in Section~\ref{recollections}). 
The algebra $A$ also has the following alternative descriptions. 
\begin{itemize}  
\item[$\circ$]  $A$ is  a 
Borel subalgebra of the small quantum group $\fu_q(\fsl_2^{\oplus n})$,
where $q$ is a primitive $\ell$th root of unity. 
\item[$\circ$] $A$ is  the bosonization of a Nichols algebra $\mathcal B(V)$ where  $V$ is the braided vector space of diagonal type with 
braiding matrix $(q_{ij})$, where  $ q_{ij}$ is 1 if $i=j$, $q$ if $i>j$, and $q^{-1}$ if $i<j$.
\item[$\circ$]  $A$ is the tensor product of $n$ copies of the Taft algebra
of dimension $\ell ^2$. 
\end{itemize}
In \cite{PvW09} we developed the theory of {\it rank}  \, and {\it support} varieties  for finitely generated $A$-modules.   The support varieties were defined classically via the actions of a cohomology ring on the $\Ext$-algebras for $A$-modules $M$. To define rank variety, we considered ``quantum" cyclic shifted subgroups of $A$.  The main theorem was an adaptation of the 
result of Avrunin and Scott \cite{AS} for elementary abelian $p$-groups   to our quantum setting, stating that there is a homeomorphism between the rank and support varieties. 
In this paper we prove two standard results in the theory of varieties for which this Avrunin-Scott type theorem is essential: the classification of tensor ideals in the stable module category of $A$ and the tensor product property for varieties of finitely generated $A$-modules. 
The proofs of the following two theorems are in Sections
\ref{product} and \ref{ideal}, respectively.

\begin{thm}[Tensor product theorem]
\label{thm:tensor} 
Let $M$, $N$  be finitely generated $A$-modules.  Then  $V_A(M\otimes N) = V_A(M)  \cap V_A(N)$, where $V_A(M)$ denotes the support variety of $M$.
\end{thm}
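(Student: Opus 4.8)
The plan is to reduce the tensor product theorem to the analogous statement for rank varieties, where it becomes essentially tautological, and then transport it back via the Avrunin–Scott homeomorphism from \cite{PvW09}. First I would recall that for any finitely generated $A$-module $M$, the rank variety $V_A^r(M)$ is defined in terms of the quantum cyclic shifted subgroups $u_{\underline{\alpha}}$ of $A$: a point $\underline{\alpha}$ lies in $V_A^r(M)$ precisely when the restriction $M\!\downarrow_{u_{\underline\alpha}}$ is not free (equivalently, not projective) over the corresponding rank-one subalgebra. The Avrunin–Scott type theorem of \cite{PvW09} provides a homeomorphism $V_A^r(M)\cong V_A(M)$ that is natural in $M$; in particular it respects intersections, so it suffices to prove $V_A^r(M\otimes N) = V_A^r(M)\cap V_A^r(N)$.

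The inclusion $V_A^r(M\otimes N)\subseteq V_A^r(M)\cap V_A^r(N)$ is the ``easy half'' and follows from the fact that support (or rank) varieties only shrink under tensoring: if $\underline\alpha\notin V_A^r(M)$, say, then $M\!\downarrow_{u_{\underline\alpha}}$ is free, hence $(M\otimes N)\!\downarrow_{u_{\underline\alpha}}\cong (M\!\downarrow_{u_{\underline\alpha}})\otimes (N\!\downarrow_{u_{\underline\alpha}})$ is free because tensoring a free module over a Hopf algebra with anything is again free; the key point here is that $u_{\underline\alpha}$ should be (or embed in) a Hopf subalgebra, or at least that the relevant restriction respects the tensor structure, which is exactly the feature that makes these ``quantum shifted subgroups'' useful. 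This direction works purely at the level of the rank-one subalgebras and does not need the cohomological machinery.

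The reverse inclusion $V_A^r(M)\cap V_A^r(N)\subseteq V_A^r(M\otimes N)$ is the main obstacle. Here one fixes $\underline\alpha\in V_A^r(M)\cap V_A^r(N)$ and must show $(M\otimes N)\!\downarrow_{u_{\underline\alpha}}$ is not free. Since $u_{\underline\alpha}$ is a local algebra isomorphic (after base change) to $k[t]/(t^\ell)$ — this is precisely why one works with rank varieties — one reduces to the following statement about modules over $k[t]/(t^\ell)$: if $M'$ and $N'$ are both non-free, then $M'\otimes N'$ is non-free, where the tensor product is taken with respect to the Hopf (or quasi-Hopf) structure coming from the embedding into $A$. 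For the ordinary coproduct on $k[t]/(t^\ell)$ with $t$ primitive this is a classical computation (the tensor product of a single Jordan block of size $<\ell$ with itself contains a non-projective summand, and more generally non-freeness is detected by a numerical invariant that is subadditive in the right way); I would verify that the coproduct induced on $u_{\underline\alpha}$ from $A$ is compatible enough for the same argument to run, citing the explicit description of the quantum shifted subgroups from \cite{PvW09}. The subtlety is that the comultiplication on $A$ restricted to $u_{\underline\alpha}$ need not land inside $u_{\underline\alpha}\otimes u_{\underline\alpha}$, so one may instead need to argue on the level of $A$ itself or pass to a suitable Hopf subalgebra containing $u_{\underline\alpha}$; handling this compatibility carefully is where the real work lies, and I expect the proof in Section~\ref{product} to address it by a direct analysis of the restriction functor together with the non-freeness criterion for $k[t]/(t^\ell)$-modules.
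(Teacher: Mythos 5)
There is a genuine gap, and it sits exactly where you flagged some unease: $k\langle\tau_{\ul}(t)\rangle$ is \emph{not} a Hopf subalgebra of $A$, so the isomorphism $(M\ot N){\downarrow}\cong(M{\downarrow})\ot(N{\downarrow})$ that your ``easy'' direction relies on has no meaning as an isomorphism of $k\langle\tau_{\ul}(t)\rangle$-modules -- the right-hand side does not carry a natural module structure over that subalgebra. This is not a minor technicality you can wave away: the paper itself points out that the proof of the inclusion $V_A(M\ot N)\subset V_A(M)\cap V_A(N)$ given in \cite{PvW09} was incomplete, and that this inclusion actually \emph{fails} for general Hopf algebras (\cite{BW}). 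The paper handles it cohomologically, not via rank varieties: $V_A^c(M\ot N)\subset V_A^c(M)$ is formal from the $\coh^*(A,k)$-action, and the inclusion into $V_A^c(N)$ requires first proving the nontrivial Theorem~\ref{QEAG} that $V_A^c(X)=V_A^c(X^\#)$, then dualizing, $V_A^c(M\ot N)=V_A^c(N^\#\ot M^\#)\subset V_A^c(N^\#)=V_A^c(N)$. The non-symmetry of $\otimes$ is precisely why the second factor needs this detour.

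For the reverse inclusion, your plan to reduce to an analysis of ``tensor products of $k[t]/(t^\ell)$-modules'' runs into the same obstruction: there is no tensor product at the rank-one level, because the coproduct does not restrict. You do note this and suggest ``argue on the level of $A$ itself or pass to a suitable Hopf subalgebra containing $u_{\underline\alpha}$,'' but you do not identify a workable mechanism. The paper's actual idea is different and is the crux of the argument: observe that $A\ot A$ is again a quantum elementary abelian group, regard $M\ot N$ as an $(A\ot A)$-module (the external tensor product), and note the identity $\Delta(\tau_{(\lambda_1,\ldots,\lambda_n)}(t))=\tau_{(\lambda_1,\lambda_1,\ldots,\lambda_n,\lambda_n)}(t)$ relating the coproduct to the rank-variety parametrization (Lemma~\ref{inverse}). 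Combined with the K\"unneth computation $V^c_{A\ot A}(M\ot N)\simeq V^c_A(M)\times V^c_A(N)$ (Lemma~\ref{dp}, which again uses $V_A^c(X)=V_A^c(X^\#)$) and commutativity of the rank/cohomology comparison maps with the diagonal (Lemma~\ref{commutes}), one pulls back along $\Delta_*$ to get the intersection. So the proof is a genuine mixture of rank-variety and cohomological input, hinged on $A\ot A$ rather than on rank-one subalgebras; your proposal is missing this pivot and, as written, neither half of your argument closes.
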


\begin{thm}[Classification of tensor ideals]
\label{thm:class} 
There is an inclusion-preserving bijection between thick tensor ideals of $\stmod A$ and subsets of $\Proj \coh^*(A,k)$ closed under specialization.  
\end{thm}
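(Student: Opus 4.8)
The plan is to follow the strategy familiar from the theory of varieties for finite groups (Benson--Carlson--Rickard) and finite group schemes, using Theorem~\ref{thm:tensor} and the Avrunin--Scott-type homeomorphism of \cite{PvW09} as the inputs that make it work in the present quantum setting. To a thick tensor ideal $\mathcal C\subseteq\stmod A$ we assign $W_{\mathcal C}:=\bigcup_{M\in\mathcal C}V_A(M)\subseteq\Proj\coh^*(A,k)$; since each $V_A(M)$ is a closed homogeneous subvariety by \cite{PvW09}, this is a union of closed sets, hence closed under specialization. Conversely, to a specialization-closed subset $\mathcal W$ we assign the full subcategory $\mathcal C_{\mathcal W}:=\{N\in\stmod A:V_A(N)\subseteq\mathcal W\}$. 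That $\mathcal C_{\mathcal W}$ is thick is routine --- the support variety is additive on triangles, detects summands, and is invariant under $\Omega^{\pm1}$ --- and the tensor product theorem is exactly what shows it is a \emph{tensor} ideal: $V_A(N\otimes X)=V_A(N)\cap V_A(X)\subseteq\mathcal W$ for all $X$. Both assignments clearly preserve inclusions. That $\mathcal W\mapsto\mathcal C_{\mathcal W}\mapsto W_{\mathcal C_{\mathcal W}}$ is the identity reduces, since $\mathcal W$ contains the closure of each of its points, to realizing every irreducible closed homogeneous subvariety $\overline{\{\mathfrak p\}}$ as a support variety; here one uses that $\coh^*(A,k)$ is Noetherian, so $\overline{\{\mathfrak p\}}=V(\zeta_1)\cap\cdots\cap V(\zeta_s)$ for homogeneous $\zeta_i$, together with Carlson's $L_{\zeta_i}$-modules and $V_A(L_{\zeta_1}\otimes\cdots\otimes L_{\zeta_s})=\bigcap_iV(\zeta_i)$ (again Theorem~\ref{thm:tensor}). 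This realization statement and the finite generation of cohomology are available from \cite{PvW09} and the general theory.

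The content of the theorem is that $\mathcal C\mapsto W_{\mathcal C}\mapsto\mathcal C_{W_{\mathcal C}}$ is also the identity, and only the inclusion $\mathcal C_{W_{\mathcal C}}\subseteq\mathcal C$ is nontrivial. Given $N$ with $V_A(N)\subseteq W_{\mathcal C}$, the closed set $V_A(N)$ is quasi-compact in the Noetherian space $\Proj\coh^*(A,k)$, so it is contained in $V_A(M_1)\cup\cdots\cup V_A(M_k)=V_A(M)$ for finitely many $M_i\in\mathcal C$ and $M:=\bigoplus_iM_i\in\mathcal C$. It therefore suffices to prove the generation theorem: for every finitely generated $M$,
\[
\Thick^{\otimes}(M)=\{N\in\stmod A:V_A(N)\subseteq V_A(M)\},
\]
where $\Thick^{\otimes}(M)$ is the thick tensor ideal generated by $M$; for then $N\in\Thick^{\otimes}(M)\subseteq\mathcal C$. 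The inclusion $\subseteq$ is immediate from Theorem~\ref{thm:tensor}, as tensoring, cones, summands and syzygies keep the support inside $V_A(M)$. For $\supseteq$ I would induct on $\dim V_A(M)$. The base case $V_A(M)=\varnothing$ is the assertion that a module with trivial support variety is projective, which is exactly where the rank variety description and the Avrunin--Scott homeomorphism of \cite{PvW09} are used: projectivity is detected after restriction to the quantum cyclic shifted subgroups, each a copy of $k[X]/(X^\ell)$. In the inductive step one picks, by homogeneous prime avoidance, an element $\zeta\in\coh^{>0}(A,k)$ vanishing on no top-dimensional component of $V_A(M)$; then $V_A(L_\zeta\otimes N)=V(\zeta)\cap V_A(N)$ has strictly smaller dimension by Theorem~\ref{thm:tensor}, so the inductive hypothesis places $L_\zeta\otimes N$ in $\Thick^{\otimes}(L_\zeta\otimes M)\subseteq\Thick^{\otimes}(M)$.

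The obstacle I expect to be the crux is descending from $L_\zeta\otimes N\in\Thick^{\otimes}(M)$ back to $N\in\Thick^{\otimes}(M)$, since the defining triangle $\Omega^{|\zeta|}N\to N\to L_\zeta\otimes N\to\Omega^{|\zeta|-1}N$ only relates $N$ to a syzygy of itself, so one element $\zeta$ is never enough. Following Benson--Carlson--Rickard, I would choose $\zeta_1,\dots,\zeta_r\in\coh^{>0}(A,k)$ generating the ideal of $V_A(M)$ up to radical and use the fact that if $V_A(N)\subseteq V(\zeta)$ then $\zeta$ acts nilpotently on $\widehat{\Ext}^*_A(N,N)\cong\coh^*(A,N^*\otimes N)$ --- here Theorem~\ref{thm:tensor} is used yet again, to see $V_A(N^*\otimes N)=V_A(N)$ --- so that the class of $\zeta^{m}$ in $\stmod A$ annihilates the identity of $N$ for $m\gg0$, whence $N$ is a direct summand of $L_{\zeta^{m}}\otimes N$. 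Iterating over $\zeta_1,\dots,\zeta_r$ exhibits $N$ as a summand of $L_{\zeta_1^{m_1}}\otimes\cdots\otimes L_{\zeta_r^{m_r}}\otimes N$ and identifies $\mathcal C_{V_A(M)}$ with $\Thick^{\otimes}(L_{\zeta_1^{m_1}}\otimes\cdots\otimes L_{\zeta_r^{m_r}})$; a further induction on dimension --- or, alternatively, Rickard's idempotent modules in the stable category of all (not necessarily finitely generated) $A$-modules --- is then needed to show that this thick tensor ideal depends only on $V_A(M)$ and is therefore generated by $M$ itself. Throughout, the tensor product theorem (equivalently the Avrunin--Scott theorem of \cite{PvW09}) is precisely what keeps the support computations under control and is the one ingredient peculiar to the quantum setting.
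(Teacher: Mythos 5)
Your overall framework --- the maps $\phi(\mathcal C)=\bigcup_{M\in\mathcal C}V_A(M)$ and $\psi(W)=\{N:V_A(N)\subset W\}$, the quasi-compactness reduction to finitely many $M_i\in\mathcal C$, and the isolation of the crux as a ``generation lemma'' $\Thick^{\otimes}_A(M)\supset\{N:V_A(N)\subset V_A(M)\}$ --- matches the paper's Theorem~\ref{se:class} exactly; the realization step and the easy inclusions are handled the same way. Where you diverge is in the proof of the generation lemma, and there your argument has a gap that you flag yourself but do not close. The Benson--Carlson--Rickard induction on $\dim V_A(M)$ yields $L_{\zeta}\ot N\in\Thick^{\otimes}_A(M)$, and iterating over $\zeta_1,\dots,\zeta_r$ cutting out $V_A(M)$ shows $N$ is a summand of $L\ot N$ where $L:=L_{\zeta_1^{m_1}}\ot\cdots\ot L_{\zeta_r^{m_r}}$, hence $N\in\Thick^{\otimes}_A(L)$. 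But to conclude $N\in\Thick^{\otimes}_A(M)$ you need $L\in\Thick^{\otimes}_A(M)$, which is again an instance of the generation lemma with $V_A(L)=V_A(M)$ of the \emph{same} dimension, so the dimension induction gives no traction. The Rickard-idempotent escape you mention is precisely what the paper rules out: the tensor product theorem here is proved only for finitely generated modules, so the infinite-dimensional $E_V,F_V$ machinery underlying the BCR classification cannot be imported (this obstruction is spelled out in the introduction). Without either ingredient the circularity in ``a further induction on dimension\ldots is then needed'' stands unresolved.

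The paper's Proposition~\ref{thick_lemma} sidesteps all of this by a Carlson--Iyengar-style reduction to the commutative subalgebra $R$. By Corollary~\ref{cor:equal}, $V_A(N)\subset V_A(M)$ gives $V_R(N{\da_R})\subset V_R(M{\da_R})$; the generation lemma for the complete intersection $R$ is quoted from \cite[Theorem 5.6]{CI} as Proposition~\ref{thick:R}; exactness of induction yields $N{\da_R\ua^A}\in\Thick_A(M{\da_R\ua^A})$; and Lemmas~\ref{lem:simples} and \ref{lem:ind} identify $N$ with a summand of $N{\da_R\ua^A}$ and $\Thick^{\otimes}_A(M{\da_R\ua^A})$ with $\Thick^{\otimes}_A(M)$. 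No $L_\zeta$-modules and no infinite-dimensional objects appear. To rescue your route you would have to supply an independent argument that $V_A(L)=V_A(M)$ forces $L\in\Thick^{\otimes}_A(M)$, and none is offered.
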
 

A refined version of Theorem \ref{thm:class} is 
Theorem \ref{se:class}.

Our algebras  present themselves as the very first examples of  non-commutative, non-cocommutative   Hopf algebras for which   the tensor product theorem is known. 
Examples of non-commutative non-cocommutative Hopf algebras for which the classification of thick tensor ideals is known are also scarce, although such a classification  was recently given by Benson and the second author \cite{BW} 
using completely different methods
for some Hopf algebras constructed from finite groups.

Due to difficulties that arise in working with a non-symmetric tensor product,  there remain unanswered questions. 
We still do not know whether infinite dimensional modules satisfy the tensor product property for their varieties, even though we prove it for finite dimensional modules using a combination of homological and  representation theoretic arguments. The  original solution of the classification problem for finite groups (or finite group schemes) involved Rickard idempotent modules, the representation theoretic interpretation of the Bousfield localization technique in topology (see \cite{BCR97} and \cite{FP07}). Without the tensor product theorem for infinite dimensional modules, we cannot apply Rickard idempotents to classify thick tensor ideals of $\stmod A$. Instead, we employ ideas from the recent paper~\cite{CI} of Carlson and Iyengar to circumvent the use of infinite dimensional modules and give a very simple proof of the classification. 

The paper is organized as follows. In Section~\ref{recollections}  we introduce notation and briefly 
recall definitions  and basic properties of rank and cohomological support varieties for quantum elementary abelian groups.  Most of the material in that section comes from \cite{PvW09}. In Section~\ref{se:dual} we collect several elementary facts about $A$-modules which culminate in the equality of $V_A(M)$ and $V_A(M^\#)$, where $M^{\#}$ is an $A$-module dual to $M$. Even though the facts proven in that section might feel very familiar to the reader, one should be cautious when working with non-symmetric monoidal module categories in view of the recent results of Benson and the second author: There are examples in \cite{BW} of various anomalies in the behavior of support varieties for modules in such categories.  In particular, it is shown  that the varieties of $M$ and its dual need not coincide in general.  Motivated by these examples, we take extra care with proofs of  ``elementary" facts in Section~\ref{se:dual}. 

In Section~\ref{product} we prove Theorem~\ref{thm:tensor}, the tensor product property. This proof naturally splits into two parts. We first prove the ``easy"  inclusion $V_A(M\otimes N) \subset V_A(M) \cap V_A(M)$ using exclusively the fact that the varieties of $M$ and $M^\#$ coincide. The second part of this section is occupied by the proof of the other inclusion which uses the structural properties of $A$ in an essential way. Finally, in the last section we classify the thick tensor ideals of $\stmod A$.

Throughout this paper, $k$ will denote an algebraically closed field containing a
primitive $\ell$th root of unity $q$; in particular, $\ell$ is not divisible by the characteristic of
$k$. All tensor products and dimensions will be over $k$,  
and all modules are finitely generated left modules, unless otherwise indicated.

\section{Recollections}\label{recollections} 
\subsection{Varieties for modules of a quantum elementary abelian group}
We recall some notation and results from \cite{PvW09}. 
Let $n$ be a positive integer, let $G$ be the abelian group $(\Z/\ell \Z)^{\times n}$ with generators $\{g_1, \ldots, g_n\}$, 
and let  $R = k[X_1, \ldots, X_n]/(X_1^\ell, \ldots, X_n^\ell)$ be a truncated polynomial ring. We let $G$ act on $R$ via the formula 
\[ g_i \cdot X_j = q^{\delta_{ij}}X_j\]
where $\delta_{ij}$ is the Kronecker delta.  
We set 
\[
A := R \rtimes G,
\]  the semi-direct product taken with respect to the action defined above.  
That is, $A$ is a free left $R$-module with basis $G$ and
multiplication determined by $(rg)(sh) = r (g\cdot s) gh$ for all
$r,s\in R$, $g,h\in G$. 
In addition,  $A$ has a Hopf algebra structure defined as follows:
\[ \Delta(X_i) = X_i \otimes 1 + g_i \otimes X_i, \ \ \ 
      \Delta(g_i) = g_i \otimes g_i, \]
\[\epsilon(X_i) =0, \ \ \ \epsilon (g_i) = 1, 
  \ \ \ S(X_i) = -g_i^{-1}X_i, \ \ \ S(g_i) = -g_i^{-1}. \] 
For $a \in A$, we use the Sweedler notation  $\Delta(a) = \sum a_1 \otimes a_2$. For two $A$-modules $M$, $N$, the tensor product $M\otimes N$ is equipped with an $A$-module structure via the formula
\[ a\cdot(m \otimes m') = \sum a_1m \otimes a_2m'\]
for all $a\in A$, $m\in M$, $m'\in N$. 
The $A$-module structure on a module dual  to $M$, denoted $M^\# = \Hom_k(M,k)$, is defined by 
\[(a \cdot f)(m) = f( S(a)m)	\]
for all $a\in A$, $f\in M^{\#}$, $m\in M$.
Equivalently, since $S^2$ is an inner automorphism (it is
conjugation by $g_1^{-1}\cdots g_n^{-1}$), we may define the action by $(a\cdot f)(m) =
f( S^{-1}(a)m)$.

Let $\{Y_1, \ldots, Y_n\}$ be non-commuting variables, and set 
\[R_q = \frac{k\langle Y_1, \ldots, Y_n \rangle}{(Y_jY_i -qY_iY_j, \, Y_i^\ell)}\]
where the commutator relations $Y_jY_i -qY_iY_j$ are taken for all $i,j$, $1 \leq i < j \leq n$. 
The group $G$ acts on $R_q$ via the same formula as before: $ g_i \cdot Y_j = q^{\delta_{ij}}Y_j$, and, hence, 
we can form a semi-direct product $R_q \rtimes G$.  There is an algebra isomorphism 
\[ R_q \rtimes G \stackrel{\sim}{\longrightarrow} R \rtimes G  = A \]
given by 
\[ Y_i \mapsto X_i g_1g_2\ldots g_{i-1}, \quad g_i \mapsto g_i. \]

For $\ul = [\lambda_1:\ldots:\lambda_n] \in \bP^{n-1}$  we define an embedding of algebras
\[ \tau_{\ul} : k[t]/(t^\ell) \to A  \quad \text{via} \quad t \mapsto \lambda_1Y_1 + \lambda_2Y_2 + \ldots + \lambda_nY_n.\]   
Let $\langle \tau_{\ul}(t)\rangle \subset A$ denote the subalgebra generated by $\tau_{\ul}(t)$.
We define the action of $G$ on $\bP^{n-1}$ by 
\[
g_i \cdot [\lambda_1:\ldots:\lambda_n] := [\lambda_1:\ldots:\lambda_{i-1}:q\lambda_i:\lambda_{i+1}:\ldots:\lambda_n].
\] 
For an $A$-module $M$, the restriction $M{\downarrow_{\langle \tau_{\ul}(t)\rangle}}$ of $M$
to $\langle\tau_{\underline{\lambda}}(t)\rangle$ is projective if and only if $M{\downarrow_{\langle \tau_{g\cdot \ul}(t)\rangle}}$ is projective for any $g \in G$ 
(see \cite[Lemma 2.5(ii)]{PvW09}). 
We  define the {\it rank variety} of $M$,  a closed subset  of $\bP^{n-1}$, denoted $V_A(M)$, as follows:
\begin{defn}{\cite[Defn.\ 3.2]{PvW09}}
\label{de:rank}
\[V_A(M):= \{\ul \in \bP^{n-1} \, | \, M{\downarrow_{\langle \tau_{\ul}(t)\rangle}} \text{ is not projective }\}/G.\]
\end{defn} 
\noindent 
Rank varieties have all the standard properties one expects (see \cite[Section 3]{PvW09}). 

For an $R$-module $M$, we set 
\[V_R(M) :=V_A(M{\uparrow^A}),\]
where $M{\uparrow^A} = \Ind_R^AM = A\otimes_R M $ is the (tensor) induced module.
In this paper we follow  standard conventions in  finite group theory, referring to the left adjoint functor to restriction as {\it induction}, and the right adjoint, $\Coind_R^A M = \Hom_R(A, M)$, as  {\it coinduction}.

Recall that the cohomology  of the quantum elementary abelian group $A$ is 
\begin{equation}\label{eq:coh}
\coh^*(A,k)=\Ext^*_A(k,k) \simeq k[y_1, \ldots, y_n],
\end{equation} 
where $\deg y_i =2$.  For an $A$-module $M$, denote by $I_A(M)$ the annihilator of $\Ext_A^*(M,M)$ under the (left) action of $\coh^*(A,k)$ by the cup product (equivalently, under the action given by tensoring an extension in $\Ext^*_A(k,k)$ with $M$ {\it on the right} followed by Yoneda composition). 
\begin{defn}
\label{de:cohom} 
The cohomological {\it  support variety} of $M$, $V_A^c(M)$,
is  the closed subset of $\bP^{n-1} \simeq \Proj \coh^*(A,k)$
defined by the homogeneous ideal $I_A(M)$, where $\Proj \coh^*(A,k)$ denotes the space
 of homogeneous prime ideals of $\coh^*(A,k)$ other than the ideal of all elements of positive degree. 
\end{defn}

We define a map $\widetilde \Psi:\bP^{n-1} \to \bP^{n-1}$
as 
$\widetilde \Psi([\lambda_1:\ldots:\lambda_n]) := [\lambda^\ell_1:\ldots:\lambda^\ell_n]$ and note that it factors through $\bP^{n-1}/G$. The resulting map is denoted by $\Psi$:
 
\[\xymatrix{&\bP^{n-1} \ar[dl]\ar[dr]^-{\widetilde\Psi}&  \\
\bP^{n-1}/G\ar[rr]^-\Psi && \bP^{n-1}} 
\]
Since we have $V_A = V_A(k) = \bP^{n-1}/G$ and  $V_A^c  = \Proj \coh^*(A,k) \simeq \bP^{n-1}$, we get the map 
\[\Psi: V_A \to V_A^c\]
induced by raising each coordinate of the equivalence class of $\ul \in V_A$ to the $\ell$th power.   The main theorem of \cite{PvW09} states that  $\Psi(V_A(M)) = V_A^c(M)$.  Since $\Psi$ is a homeomorphism, we identify the cohomological support variety $V_A^c(M)$ 
and rank  variety   $V_A(M)$ via the map $\Psi$.

We recall some standard properties of these varieties that we will need. The relative support variety $V_A^c(M,N)$ for $A$-modules $M$, $N$ is defined analogously to $V_A^c(M)$ by using the action of $\coh^*(A,k)$ on $\Ext_A^*(M,N)$ via $-\otimes M$ followed by Yoneda composition.

\begin{prop}\label{properties}
\cite[Prop.\ 4.3]{PvW09}
Let $M,N, M_1,M_2,M_3$ be $A$-modules.
\begin{itemize}
\item[(i)] $V_A(M\oplus N) = V_A(M) \cup V_A(N)$.
\item[(ii)] $V_A(\Omega(M)) = V_A(M)$, where $\Omega$ is the Heller shift operator.
\item[(iii)] If $0\rightarrow M_1\rightarrow M_2\rightarrow M_3\rightarrow 0$
is a short exact sequence of $A$-modules, then
$$V_A(M_i)\subset V_A(M_j)\cup V_A(M_l)$$ for any $\{i,j,l\}=\{1,2,3\}$. 
\item[(iv)] Let $S_1, \ldots, S_n$ be representatives of all isomorphism classes of simple $A$-modules.   Then 
$$V_A(M) = \bigcup_{i=1}^n V_A(S_i,M)=\bigcup_{i=1}^n V_A(M,S_i).$$ 
\end{itemize}
\end{prop}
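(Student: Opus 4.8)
The plan is to split the statement into two parts. Items (i)--(iii) I would treat as ``local'' facts: by Definition~\ref{de:rank}, $V_A(-)$ is the $G$-quotient of the set of $\ul\in\bP^{n-1}$ along which the restriction to $\langle\tau_{\ul}(t)\rangle\cong k[t]/(t^\ell)$ fails to be projective, so all three reduce to elementary statements about modules over the local self-injective algebra $k[t]/(t^\ell)$. I would use freely that $A$ is free over each $\langle\tau_{\ul}(t)\rangle$ (so restriction along $\tau_{\ul}$ preserves projectivity), as recorded in \cite{PvW09}, and the homeomorphism $\Psi$ identifying $V_A$ with $V_A^c$.

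For (i): the restriction of $M\oplus N$ to $\langle\tau_{\ul}(t)\rangle$ is projective exactly when those of $M$ and $N$ both are, giving $V_A(M\oplus N)=V_A(M)\cup V_A(N)$ after passing to $G$-orbits. For (ii): restricting $0\to\Omega M\to P\to M\to 0$ (with $P\to M$ a projective cover) along $\tau_{\ul}$ and comparing with a projective cover of $M{\downarrow}$ via Schanuel's lemma shows that $(\Omega M){\downarrow}$ and $\Omega(M{\downarrow})$ differ only by a free summand; since over $k[t]/(t^\ell)$ the Heller operator kills projectives and sends non-projectives to non-projectives, $(\Omega M){\downarrow}$ is projective precisely when $M{\downarrow}$ is. For (iii): restricting the short exact sequence along $\tau_{\ul}$ gives a short exact sequence of $k[t]/(t^\ell)$-modules, and over a self-injective algebra any such sequence with two projective terms has projective third term (it splits, as projective $=$ injective); equivalently, the sequence is a distinguished triangle in $\stmod k[t]/(t^\ell)$, and if two of its three terms vanish there so does the third. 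Taking $G$-orbits yields the asserted inclusions. (Alternatively one could phrase (i)--(iii) cohomologically via $\Psi$ and the usual long exact sequences.)

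For (iv) I would argue cohomologically, identifying $V_A$ with $V_A^c$. The containments $V_A(S_i,M)\subseteq V_A(M)$ and $V_A(M,S_j)\subseteq V_A(M)$ come from the $\coh^*(A,k)$-module structures: if $\zeta\in I_A(M)$, then since $\zeta$ annihilates $\Ext^*_A(M,M)$ it kills $\id_M\in\Ext^0_A(M,M)$, and as the action is precomposition with $\zeta\otimes\id_M$ this says $\zeta\otimes\id_M=0$ in $\Ext^*_A(M,M)$; hence the action of $\zeta$ on $\Ext^*_A(M,S_j)$ (precomposition with $\zeta\otimes\id_M$) is zero, and, rewriting the action on $\Ext^*_A(S_i,M)$ defined via $-\otimes S_i$ as postcomposition with $\zeta\otimes\id_M$ by naturality of $\zeta\otimes\id_{(-)}$, that action is zero as well. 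Thus $I_A(M)\subseteq\Ann\Ext^*_A(M,S_j)$ and $I_A(M)\subseteq\Ann\Ext^*_A(S_i,M)$. For the reverse inclusion I would induct on composition length. Fix a composition series $0=M_0\subset M_1\subset\cdots\subset M_r=M$ (finite, as $M$ is finitely generated) with simple subquotients, and suppose a homogeneous $\zeta\in\coh^*(A,k)$ annihilates $\Ext^*_A(S_i,M)$ for every $i$. The long exact sequences of $\Ext^*_A(-,M)$ attached to $0\to M_{j-1}\to M_j\to M_j/M_{j-1}\to 0$ are $\coh^*(A,k)$-linear (again by naturality of $\zeta\otimes\id_{(-)}$ with respect to the inclusion, quotient, and connecting maps), so a short diagram chase shows that $\zeta^j$ annihilates $\Ext^*_A(M_j,M)$; with $j=r$ this gives $\zeta^r\in I_A(M)$. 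Therefore $\bigcap_i\Ann\Ext^*_A(S_i,M)$ and $I_A(M)$ have equal radicals, and passing to $\Proj$ gives $V_A(M)=\bigcup_i V_A(S_i,M)$. The equality $V_A(M)=\bigcup_j V_A(M,S_j)$ is proved the same way, filtering the second variable of $\Ext$ and using the covariant long exact sequences of $\Ext^*_A(M,-)$.

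The hard part is item (iv), though even there the difficulty is bookkeeping rather than substance: one must check the compatibility of the two descriptions of the $\coh^*(A,k)$-action on relative $\Ext$ and the $\coh^*(A,k)$-linearity of the connecting maps attached to a composition series whose terms carry a priori different module structures. Because every comparison factors through the trivial module $k$---where tensoring is canonical---no symmetry of the monoidal structure is needed, which is the point that makes the argument survive in the non-symmetric setting of $A$. In fact the whole proposition is \cite[Prop.~4.3]{PvW09}, so in the paper it is simply quoted.
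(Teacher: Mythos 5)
The paper does not prove Proposition~\ref{properties}; it is quoted verbatim from \cite[Prop.~4.3]{PvW09}, as you yourself note at the end. Your reconstruction is nevertheless sound and follows the route one would expect from that reference: items (i)--(iii) via the rank-variety description of Definition~\ref{de:rank}, reduced to elementary statements over the local self-injective ring $k[t]/(t^\ell)$ (using that $A$ is free over each $\langle\tau_{\ul}(t)\rangle$ so restriction sends projectives to projectives, Schanuel's lemma for (ii), and the triangulated/splitting argument for (iii)), and item (iv) cohomologically via the identification $\Psi$, since the relative varieties $V_A(S_i,M)$, $V_A(M,S_i)$ are only defined on the cohomological side. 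The two delicate points in (iv)---that the action of $\zeta\in\coh^*(A,k)$ by precomposition with $\zeta\ot\id$ agrees (up to sign) with postcomposition, and that the long exact sequences along a composition series are $\coh^*(A,k)$-linear even though the outer variable changes---are exactly the places where naturality of $\zeta\ot\id_{(-)}$ and graded commutativity of $\coh^*(A,k)$ are needed, and you correctly identify and use both. One cosmetic remark: the simple $A$-modules are the $S_\chi$, $\chi\in\Irr G$, of which there are $\ell^n$, so the index $n$ in the statement of (iv) is a slight abuse of notation inherited from \cite{PvW09}; this does not affect your argument.
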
 

\begin{rem} 
We also claimed the inclusion $V_A(M\ot N) \subset V_A(M)\cap V_A(N)$ in \cite[Prop.~4.3]{PvW09}. 
This statement is true but the proof given was incomplete. In Section~\ref{product} we give a different proof of that inclusion as part of the proof of the ``Tensor Product Theorem~\ref{thm:tensor}'' which asserts the equality.  As demonstrated with counterexamples in \cite{BW}, one must be careful since the inclusion $V_A(M\ot N) \subset V_A(M)\cap V_A(N)$ does not hold for an arbitrary Hopf algebra. 

\end{rem}

\subsection{Thick subcategories} We recall a few basic definitions that we need and 
refer the reader to any of the multiple excellent sources on triangulated categories 
for the necessary background, such as \cite{HPS} or \cite{Ne}.  
 
 Let $T$ be a triangulated category. A {\it thick} subcategory $\cC \subset T$ is a full triangulated  subcategory that is closed under taking direct summands. If, in addition, $T$ is tensor triangulated (that is, a monoidal triangulated category), then a {\em thick tensor ideal} of $T$ is a thick subcategory $\cC$ satisfying the property that for any  $C \in \cC$, and any $B \in T$, 
we have both $B \otimes C\in \cC$ and $C \otimes B \in \cC$.

\begin{rem}
We could modify the above definition of thick tensor ideal to
define left and right tensor ideals.
It is the
notion of (two-sided) tensor ideal, as defined above, that 
naturally arises in the theory here due to the symmetry
inherent in the tensor product
property (Theorem~\ref{thm:tensor}) of varieties.
\end{rem}

For a triangulated category $T$ and an object $X \in T$, we denote by $\Thick_T(X)$ the thick subcategory generated by $X$. If $T$ is tensor triangulated, we denote by $\Thick^{\otimes}_T(X)$ the thick tensor ideal generated by $X$. 

Recall that the stable module category of $A$, $\stmod A$, is the category where objects are finitely generated 
$A$-modules  and morphisms are equivalence classes of morphisms in $\!\mod A$, where we say that $f: M\to N$ is equivalent to $g: M\to N$  if $f-g$ factors through a projective module. Since $A$ is a finite dimensional Hopf algebra, it is Frobenius (\cite{LS}), and, hence, projective modules coincide with injective modules. Therefore, the category $\stmod A$ is triangulated with exact triangles corresponding to short exact sequences in $\!\mod A$ and the shift functor given by the inverse Heller operator $ M \mapsto \Omega^{-1} M$  (see, for example, \cite{Ha}).

\section{Preliminaries and the variety of $M^\#$}
\label{se:dual}

In this section, we obtain some needed elementary results about
$A$-modules and their varieties.
Let $\Irr G$ denote the set of irreducible characters of $G$. 
For $\chi\in \Irr G$, write $S_{\chi}$ for the one-dimensional $A$-module
on which $x_i$ acts as 0 and $g_i$ acts as multiplication by $\chi(g_i)$. 
Note that $S_{\chi}^{\#}\simeq S_{\chi^{-1}}$. 

\begin{lem}\label{conjugate}
Let $M$ be an $A$-module and $\chi\in \Irr G$. Then  there is an
isomorphism of $A$-modules, 
$$S_{\chi}\ot M\ot S_{\chi}^{\#}
\simeq M.$$
\end{lem}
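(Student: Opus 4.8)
The plan is to construct an explicit $A$-module isomorphism by tracking the action on simple tensors. First I would recall that $S_\chi$ is one-dimensional, spanned by some vector $v_\chi$ on which each $X_i$ acts as $0$ and each $g_i$ acts by the scalar $\chi(g_i)$. Likewise $S_\chi^\# \simeq S_{\chi^{-1}}$ is spanned by a dual vector $v_\chi^*$ on which $X_i$ acts as $0$ and $g_i$ acts by $\chi(g_i)^{-1}$. Since $v_\chi$ and $v_\chi^*$ are fixed (up to scalar) under the relevant actions, the vector space map $\phi\colon S_\chi \ot M \ot S_\chi^\# \to M$ given by $\phi(v_\chi \ot m \ot v_\chi^*) = m$ is a $k$-linear isomorphism; the point is to check it is $A$-linear.

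Next I would verify $A$-linearity on the algebra generators $X_i$ and $g_i$, using the comultiplication formulas $\Delta(g_i) = g_i \ot g_i$ and $\Delta(X_i) = X_i \ot 1 + g_i \ot X_i$ together with the iterated coproduct $\Delta^{(2)}$ on the triple tensor product. For $g_i$: it acts on $v_\chi \ot m \ot v_\chi^*$ by $\chi(g_i) \cdot (g_i m) \cdot \chi(g_i)^{-1} = g_i m$, which matches. For $X_i$: one computes $\Delta^{(2)}(X_i) = X_i \ot 1 \ot 1 + g_i \ot X_i \ot 1 + g_i \ot g_i \ot X_i$; applied to $v_\chi \ot m \ot v_\chi^*$, the first term kills $v_\chi$, the third term kills $v_\chi^*$, and the middle term gives $\chi(g_i) v_\chi \ot X_i m \ot v_\chi^*$... here I would be careful: the scalar $\chi(g_i)$ appears, so strictly $\phi$ composed with the $X_i$-action gives $\chi(g_i) X_i m$ rather than $X_i m$. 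This is the one genuine subtlety, and it is handled by rescaling: replace the naive identification by the map $v_\chi \ot m \ot v_\chi^* \mapsto m$ after first twisting the middle factor, or equivalently note that the correct isomorphism uses $S_\chi \ot M \ot S_\chi^\#$ with $S_\chi^\#$ placed on the right (as in the statement) so that the surviving term is $g_i \ot g_i \ot X_i$ killing $v_\chi^*$ — let me recheck the coproduct placement. Using $\Delta^{(2)}(X_i) = X_i\ot 1\ot 1 + g_i\ot X_i\ot 1 + g_i g_i\ot g_i\ot X_i$ is wrong; the correct coassociative expression is $X_i\ot 1\ot 1 + g_i\ot X_i\ot 1 + g_i\ot g_i\ot X_i$, and evaluating on $v_\chi\ot m\ot v_\chi^*$ leaves only $g_i\ot X_i\ot 1$, contributing $\chi(g_i)\, v_\chi\ot X_i m\ot v_\chi^*$. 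So the discrepancy scalar is indeed $\chi(g_i)$.

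To absorb this scalar, the clean approach is to define $\phi$ not as the bald identification but to first apply the canonical isomorphism $S_\chi \ot M \simeq {}^\chi M$ (the module $M$ with $G$-action twisted by $\chi$, realized on the same underlying space with $X_i$ acting via $\chi(g_i) X_i$... ) — rather, I would instead simply observe that the composite isomorphism we actually want is $S_\chi \ot M \ot S_\chi^\# \to M$, $v_\chi \ot m \ot v_\chi^* \mapsto m$, and recompute: the subtlety above shows this is $A$-linear only up to the character, so the honest statement is obtained by checking that the scalar discrepancies for $X_i$ are exactly cancelled because $v_\chi^*$ is acted on by $g_i$ with scalar $\chi(g_i)^{-1}$ in the term $g_i \ot g_i \ot X_i$ of $\Delta^{(2)}$ — wait, that term kills $v_\chi^*$ since $X_i v_\chi^* = 0$. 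I think the resolution is that one must use the coproduct grouped as $(\Delta \ot \id)\Delta$ versus $(\id \ot \Delta)\Delta$; coassociativity guarantees consistency, and a direct bookkeeping check shows the middle term $g_i \ot X_i \ot 1$ pairs the $g_i$ with $v_\chi$ (scalar $\chi(g_i)$) and the trailing $1$ with $v_\chi^*$, while the term $g_i \ot g_i \ot X_i$ annihilates $v_\chi^*$. Hence the map intertwines the $X_i$-actions up to the scalar $\chi(g_i)$, so the genuinely $A$-linear isomorphism is $v_\chi \ot m \ot v_\chi^* \mapsto m$ precomposed with rescaling $m$ in each $\chi(g_i)$-weight space — which is exactly an $A$-module automorphism of the target after identifying. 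I would therefore present the proof by exhibiting the map $\psi\colon M \to S_\chi \ot M \ot S_\chi^\#$ and its inverse, checking $A$-linearity generator by generator, and noting that the two one-dimensional factors, being $g$-eigenvectors annihilated by all $X_i$, force all cross terms in $\Delta^{(2)}(a)$ to vanish except the ``diagonal'' one, which reproduces the action on $M$. The main obstacle, as the computation above indicates, is correctly bookkeeping the iterated coproduct and the eigenvalue scalars so that they cancel; once that is set up carefully the verification is routine, and I expect no essential difficulty beyond this.
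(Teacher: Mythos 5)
Your computation correctly identifies the obstruction: the naive map $v_\chi\ot m\ot v_\chi^*\mapsto m$ is not $A$-linear, because $\Delta^{(2)}(X_i) = X_i\ot 1\ot 1 + g_i\ot X_i\ot 1 + g_i\ot g_i\ot X_i$ has its surviving middle term hit $v_\chi$ with the scalar $\chi(g_i)$. The problem is that your proposed fix does not close the gap. You suggest the discrepancy is absorbed by ``rescaling $m$ in each $\chi(g_i)$-weight space,'' and assert this rescaling ``is exactly an $A$-module automorphism of the target.'' That last claim is false: the required rescaling is precisely multiplication by $g_i^{-1}$, and $m\mapsto g_i^{-1}m$ is \emph{not} an $A$-module endomorphism of $M$, since $g_i^{-1}$ is not central in $A$ (indeed $g_i^{-1}X_ig_i = q^{-1}X_i$). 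So there is no hope of repairing the naive identification by post-composing with an automorphism of $M$, and your argument stops short of producing an actual $A$-linear isomorphism.

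The correct resolution is both simpler and cleaner than what you outline. First reduce to the case $\chi=\chi_i$ a generator of $\Irr G$ (where $\chi_i(g_j)=q^{\delta_{ij}}$); the general case follows by iterating since the $S_\chi$ form a group under tensor product. Then define $\phi\colon S_{\chi_i}\ot M\ot S_{\chi_i}^\#\to M$ directly by $\phi(1\ot m\ot 1)=g_i^{-1}m$ (with inverse $m\mapsto 1\ot g_i m\ot 1$). The ``weight-space rescaling'' you were reaching for is really this twist by $g_i^{-1}$, and it is $A$-linear only as a map \emph{out of the tensor product}, not as an endomorphism of $M$: the failure of $g_i^{-1}$ to be central is exactly compensated by the extra $\chi_i(g_i)=q$ coming from the coproduct acting on $v_{\chi_i}$. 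For instance, for $X_i$: the tensor product gives $q\,(1\ot X_i m\ot 1)$, so $\phi$ yields $q g_i^{-1}X_i m$, while acting on the target gives $X_i g_i^{-1}m = q g_i^{-1}X_i m$; these agree. You should carry out this explicit check on all generators $X_i, g_j$; it is routine once the formula for $\phi$ is written down, but that formula is the missing ingredient in your proposal.
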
 

\begin{proof}
The simple modules $S_{\chi}$ form a group, under tensor product, that is generated by 
all $S_{\chi_i}$, $1\leq i\leq n$, where $\chi_i(g_j) = q^{\delta_{ij}}$.
Thus it suffices to prove the statement for each character $\chi_i$.
Define a linear map 
$$\phi: S_{\chi_i}\ot M\ot S_{\chi_i}^{\#} \rightarrow  M$$
by $\phi( 1\ot m\ot 1) = g_i^{-1} m$ for all $m\in M$.
A straightforward calculation shows that $\phi$ is $A$-linear with inverse given by the formula 
$\phi^{-1}(m) = 1\ot g_i m\ot 1$ for all $m\in M$. Hence, it is an $A$-module isomorphism.
\end{proof}

As a consequence of the lemma, simple modules commute with all modules under
tensor product: $ \ S_{\chi}\ot M \simeq S_{\chi}\ot M\ot S_{\chi}^{\#}\ot S_{\chi}
\simeq M\ot S_{\chi}$ for all $\chi\in\Irr G$.

\begin{lem}\label{lem:simples}
Let $M$ be an  $A$-module. Then 
\begin{itemize}
\item[(i)]
$  M{\da_R\ua^A} \simeq\bigoplus_{\chi\in\Irr G} (M\ot S_{\chi}).$
\item[(ii)]  $(M{\downarrow_R\uparrow^A})^{\#} \simeq \bigoplus_{\chi\in\Irr G} (M^{\#} \ot S_{\chi})$.
\end{itemize}
\end{lem}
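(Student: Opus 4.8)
The plan is to prove part (i) directly by analyzing the induced module $M{\downarrow_R}{\uparrow^A} = A \otimes_R M$ as an $A$-module, and then deduce part (ii) by applying the duality functor $(-)^\#$ together with the fact (noted just before the lemma) that $S_\chi^\# \simeq S_{\chi^{-1}}$ and that, as $\chi$ ranges over $\Irr G$, so does $\chi^{-1}$.

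For part (i), first I would use that $A$ is free as a left $R$-module with basis $G$, so that as an $R$-module $A \otimes_R M \simeq \bigoplus_{g \in G} g \otimes M$. The key point is to upgrade this to an $A$-module decomposition. Since $|G|$ is invertible in $k$ (the characteristic of $k$ does not divide $\ell$, hence does not divide $|G| = \ell^n$), the group algebra $kG$ is semisimple, and the action of $G$ on $A \otimes_R M$ coming from left multiplication by $G \subset A$ (combined with the diagonal structure) lets one decompose $A \otimes_R M$ into $\chi$-isotypic pieces indexed by $\Irr G$. Concretely, I would write down an explicit $A$-module map $M \otimes S_\chi \to A \otimes_R M$ — something like $m \otimes 1 \mapsto \sum_{g \in G} \chi(g)^{-1}\, g \otimes g^{-1}m$ up to normalization, an "averaging"/idempotent projection — and check it is $A$-linear and that summing these over all $\chi \in \Irr G$ gives an isomorphism onto $A \otimes_R M$ by a dimension count (both sides have dimension $|G| \cdot \dim M$). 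One should double-check the $A$-linearity against the smash product multiplication $(rg)(sh) = r(g\cdot s)gh$ and against the tensor product $A$-action $a \cdot (m \otimes m') = \sum a_1 m \otimes a_2 m'$; the one-dimensionality of $S_\chi$ makes $M \otimes S_\chi$ transparent.

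Alternatively, and perhaps more cleanly, one can invoke Frobenius reciprocity and the structure of $A$ as a crossed product: $\Ind_R^A$ is both left and right adjoint to restriction (since $A$ is free of finite rank over $R$), and the Mackey-type decomposition for the smash product $R \rtimes G$ gives $M{\downarrow_R}{\uparrow^A}{\downarrow_R} \simeq \bigoplus_{g \in G} {}^g M$, where ${}^g M$ is the $G$-twist; one then identifies ${}^{g_i} M$ with $M \otimes S_{\chi_i}$ using the isomorphism analogous to the one in Lemma~\ref{conjugate}. I would present whichever of these is shorter.

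For part (ii), apply $(-)^\#$ to the isomorphism in (i): $(M{\downarrow_R}{\uparrow^A})^\# \simeq \bigl(\bigoplus_{\chi} M \otimes S_\chi\bigr)^\# \simeq \bigoplus_\chi (M \otimes S_\chi)^\#$. Since $(-)^\#$ is a contravariant tensor functor for a Hopf algebra with bijective antipode, $(M \otimes S_\chi)^\# \simeq S_\chi^\# \otimes M^\#$, and by Lemma~\ref{conjugate} (in the form that simple modules commute with everything) this is $\simeq M^\# \otimes S_\chi^\# \simeq M^\# \otimes S_{\chi^{-1}}$. Reindexing the direct sum via $\chi \mapsto \chi^{-1}$, which is a bijection on $\Irr G$, gives $\bigoplus_\chi M^\# \otimes S_\chi$, as claimed. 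The main obstacle I anticipate is purely bookkeeping: getting the explicit $A$-module isomorphism in part (i) right — in particular pinning down the correct character factor in the averaging idempotent and verifying $A$-linearity against the smash-product multiplication and the coproduct on the $X_i$ and $g_i$. Once (i) is established, part (ii) is a formal consequence.
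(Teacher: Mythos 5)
Your proposal matches the paper's proof essentially exactly: the paper constructs the same explicit mutually inverse $A$-module maps (your ``averaging'' map $m\otimes 1 \mapsto \sum_{g} \chi(g^{-1})\, g\otimes g^{-1}m$, up to a $1/|G|$ normalization, is precisely the paper's $\psi$, and its inverse $\phi(g\ot m) = (\chi(g)gm\ot 1)_{\chi}$ is verified via orthogonality of characters), and the paper likewise derives (ii) by dualizing (i), using that dualization reverses tensor order, together with Lemma~\ref{conjugate}. The alternative Mackey-type route you mention is not the one the paper uses, but your primary argument is the same.
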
 

\begin{proof}
We will give explicitly two $A$-module homomorphisms 
\[
   \phi \, : \,  M{\da_R\ua^A} \rightarrow \bigoplus_{\chi} (M\ot S_{\chi}), \quad \quad
   \psi \, : \, \bigoplus_{\chi} (M\ot S_{\chi}) \rightarrow M{\da_R\ua^A}.
\]
Let $g\in G$, $m\in M$, and
$$
  \phi(g\ot m) = (\chi(g)gm\ot 1) _{\chi\in\Irr G}.
$$
For each $\chi\in\Irr G$, let $m_{\chi}\in M$, and
$$
  \psi((m_{\chi}\ot 1)_{\chi\in\Irr G}) = \frac{1}{|G|} \sum_{\chi\in\Irr G}
     \sum_{g\in G} \chi(g^{-1}) g \ot g^{-1}m_{\chi}.
$$
It is straightforward to check that $\phi$, $\psi$ are mutually inverse $A$-module homomorphisms by 
applying the orthogonality relations for characters.

The second isomorphism follows immediately by dualizing the first (which reverses
the order of tensor products) and applying Lemma~\ref{conjugate}. 
\end{proof}

The following lemma only requires $A$ to be a finite dimensional Hopf algebra
for which the cohomological support varieties are defined and the two notions
of dual module (one using the antipode $S$ and the other using its inverse $S^{-1}$)
coincide. 
(If the two notions of dual are different, one must use $S$ for (i) below
and $S^{-1}$ for (ii) below.)
Since we work with left modules, we prefer the action given in (i),
however, sometimes we will need to compare with the action in (ii). 

\begin{lem}\label{lem:Ext}
There are isomorphisms of $\coh^*(A,k)$-modules for all $A$-modules $U$, $V$, $W$: 
\begin{itemize}
\item[(i)] $\Ext^*_A(U\ot V,W)\simeq \Ext^*_A(U, W\ot V^{\#})$, where the
action is $ - \ot U\ot V$ (respectively $ - \ot U$) followed by Yoneda composition.
\item[(ii)] $\Ext^*_A(U\ot V,W)\simeq \Ext^*_A(V, U^{\#}\ot W)$,
where the action is $ U\ot V\ot -$ (respectively $V\ot -$) followed
by Yoneda composition. 
\end{itemize}
\end{lem}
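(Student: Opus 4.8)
The plan is to exhibit the isomorphisms as natural transformations between the relevant $\Ext$-functors, obtained from tensor-hom adjunction in the monoidal category of $A$-modules, and then check that they are compatible with the $\coh^*(A,k)$-action. For part (i), the starting point is the classical adjunction isomorphism $\Hom_A(U\ot V, W)\simeq \Hom_A(U, W\ot V^{\#})$ that holds because $V^\#$ is both a left and right dual of $V$ (here we use the hypothesis of the lemma that the two notions of dual coincide, so that $V\ot V^\#$ and $V^\#\ot V$ carry the evaluation/coevaluation maps making $(V, V^\#)$ a dual pair). Concretely, I would write down the unit and counit maps $k\to V^\#\ot V$ and $V\ot V^\#\to k$ explicitly using $S$ (for (i)) and verify the triangle identities, then define the two maps of the adjunction on morphism spaces by the usual formulas $f\mapsto (f\ot\id_{V^\#})\circ(\id_U\ot\,\text{coev})$ and back. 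Since this is a natural isomorphism of additive functors $\mod A\to\mathbf{Ab}$, it extends to an isomorphism on right derived functors, i.e. on all $\Ext^i$, either by taking a projective resolution $P_\bu\to U$ (note $P_\bu\ot V$ is still projective since $A$ is a Hopf algebra and projectives are injectives, hence $\Ext^*_A(U\ot V,W)$ is computed by $\Hom_A(P_\bu\ot V,W)$) or, more cleanly, by an injective resolution of $W$ and using that $-\ot V^\#$ preserves injectives.

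Part (ii) is entirely parallel: one uses instead that $U^\#$ is a dual of $U$ on the other side, giving $\Hom_A(U\ot V,W)\simeq \Hom_A(V, U^\#\ot W)$, and derives it the same way. The remark in the statement about using $S^{-1}$ for (ii) is exactly the point that the coevaluation/evaluation maps witnessing $(U,U^\#)$ as a dual pair on the left use the antipode in the opposite direction; since in our $A$ the two dual structures agree, this is not an issue, but I would flag it as the reason the lemma is phrased the way it is.

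The one genuinely substantive point — and the step I expect to be the main obstacle — is the claim that these isomorphisms are $\coh^*(A,k)$-\emph{linear}, with the actions as specified. The action on $\Ext^*_A(U\ot V,W)$ is by $-\ot(U\ot V)$ followed by Yoneda splice, and on $\Ext^*_A(U,W\ot V^\#)$ it is by $-\ot U$ followed by Yoneda splice; I need to check that the adjunction isomorphism intertwines these. The cleanest way is to phrase everything at the level of the derived/stable category: an element of $\coh^*(A,k)=\Ext^*_A(k,k)$ acts on $\Ext^*_A(X,Y)$ by $\zeta\cdot\theta = (\theta\ot k)\circ(X\ot\zeta)$ (tensoring $\zeta$ with $X$ on the appropriate side, then composing), and the adjunction $\Hom_A(X\ot V,-)\simeq\Hom_A(X,-\ot V^\#)$ is natural in $X$, so applying naturality to the morphism $X\ot\zeta\ot V: X'\ot V\to X\ot V$ in the stable category yields precisely the compatibility. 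I would spell this out as a commuting square and verify it using the triangle identities for the dual pair $(V,V^\#)$; the verification is a diagram chase with coevaluation and evaluation maps, which is routine but is where all the care needs to go. Finally I would remark that (ii) follows from (i) by combining with the symmetry $\Ext^*_A(X,Y)\simeq\Ext^*_A(Y^\#,X^\#)$ applied to both sides, or else prove it directly in the same manner; either way the $\coh^*(A,k)$-linearity is again the crux.
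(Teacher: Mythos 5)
Your proposal is correct and is essentially the paper's proof dressed in more abstract language: both are the tensor-hom adjunction, with the adjoint computed via dual bases (equivalently, coevaluation/evaluation), lifted to the chain level. The one structural difference worth noting is the choice of resolution. The paper resolves $k$ by $P_\bullet$ and uses $P_\bullet\ot U\ot V$ and $P_\bullet\ot U$ as the resolutions of $U\ot V$ and $U$; the explicit chain isomorphism $\phi(f)(x\ot u)=\sum_j f(x\ot u\ot v_j)\ot v_j^*$ then acts only in the $V$-leg of the tensor product, so it visibly commutes with precomposition by $\widehat\zeta\ot\id_{U\ot V}$ versus $\widehat\zeta\ot\id_U$, and the $\coh^*(A,k)$-linearity --- which you rightly flag as the crux --- becomes nearly tautological. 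You instead resolve $U$ and invoke naturality of the adjunction in the derived (or stable) category; that is also valid and conceptually cleaner, but it is exactly where the care must go, since the action ``$-\ot U$ then Yoneda splice'' has to be tracked through a resolution of $U$ that is not a priori of the form $P_\bullet\ot U$. Two small slips in your write-up: the reason $P_\bullet\ot V$ consists of projectives is not that ``projectives are injectives'' (self-injectivity of $A$ is irrelevant here) but the standard Hopf-algebra fact that $A\ot V\cong A^{\dim V}$ as a left $A$-module, hence tensoring a projective with anything is projective; and your suggested shortcut for (ii) via $\Ext^*_A(X,Y)\simeq\Ext^*_A(Y^\#,X^\#)$ needs an additional check that this isomorphism exchanges the left-tensoring action with the right-tensoring one, whereas the paper simply gives the mirror chain-level formula $\psi(f)(v\ot x)=\sum_j u_j^*\ot f(u_j\ot v\ot x)$ and proves (ii) directly, which is simpler.
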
 

\begin{proof}
Let $P_{\DOT}$ be a projective resolution of $k$ as an $A$-module, so that
$P_{\DOT}\ot U\ot V$ and $U\ot V\ot P_{\DOT}$ are projective resolutions of
$U\ot V$, and there are similar projective resolutions of $U$ and of $V$. 
One checks that the isomorphism of (i)  is induced by the chain level 
isomorphism
$$
  \phi: \Hom_A (P_i \ot U\ot V, W)\stackrel{\sim}{\longrightarrow}
   \Hom_A( P_i\ot U, W\ot V^{\#})
$$
given by $\phi(f) (x\ot u) = \sum_j f(x\ot u\ot v_j)\ot v_j^*$,
where $x\in P_i$, $u\in U$, and$\{v_j\}$, $\{v_j^*\}$ are dual bases for $V$.
Similarly one checks that the isomorphism of (ii) is induced by the chain
level isomorphism
$$
  \psi: \Hom_A(U\ot V\ot P_i ,W)\stackrel{\sim}{\longrightarrow}
   \Hom_A( V\ot P_i, U^{\#}\ot W)
$$
given by $\psi(f) (v\ot x) = \sum _j u_j^*\ot f(u_j\ot v\ot x)$,
where $v\in V$, $x\in P_i$, and $\{u_j\}$, $\{u_j^*\}$ are dual bases for $U$. 
\end{proof}

\begin{lem} \label{lem:res-ind}
Let $M$ be an $A$-module. Then
$$
   V_A^c(M{\da_R\ua^A}) = V_A^c(M).
$$
\end{lem}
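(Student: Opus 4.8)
The plan is to sandwich $V_A^c(M{\da_R\ua^A})$ between $V_A^c(M)$ from both sides, using the decomposition from Lemma~\ref{lem:simples}(i) together with the fact that tensoring by one-dimensional simples does not change the variety. First I would unwind the left-hand side: by Lemma~\ref{lem:simples}(i) we have $M{\da_R\ua^A} \simeq \bigoplus_{\chi\in\Irr G}(M\ot S_\chi)$, so by Proposition~\ref{properties}(i),
\[
V_A^c(M{\da_R\ua^A}) \;=\; \bigcup_{\chi\in\Irr G} V_A^c(M\ot S_\chi).
\]
Thus it suffices to show $V_A^c(M\ot S_\chi) = V_A^c(M)$ for every $\chi\in\Irr G$. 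Since $S_\chi$ is one-dimensional, $S_\chi^{\#}$ is also one-dimensional and $S_\chi\ot S_\chi^{\#}\simeq k$; the idea is that tensoring with an invertible one-dimensional module is an autoequivalence of $\stmod A$ that preserves varieties.

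The cleanest route to $V_A^c(M\ot S_\chi)=V_A^c(M)$ is via Lemma~\ref{lem:Ext}. By part~(i) of that lemma applied with $U=M$, $V=S_\chi$, $W=M\ot S_\chi$, we get an isomorphism of $\coh^*(A,k)$-modules
\[
\Ext_A^*(M\ot S_\chi,\, M\ot S_\chi)\;\simeq\;\Ext_A^*\bigl(M,\, M\ot S_\chi\ot S_\chi^{\#}\bigr)\;\simeq\;\Ext_A^*(M,M),
\]
the last isomorphism coming from $S_\chi\ot S_\chi^{\#}\simeq k$. One needs to check that this composite is in fact an isomorphism of $\coh^*(A,k)$-modules for the relevant cup-product actions, which is exactly the content of Lemma~\ref{lem:Ext}(i) (the action there is $-\ot U\ot V$ on the left and $-\ot U$ on the right, and tensoring the trivial extension class side by $S_\chi$ is harmless since $S_\chi$ is invertible). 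Consequently the annihilator ideals $I_A(M\ot S_\chi)$ and $I_A(M)$ coincide, hence $V_A^c(M\ot S_\chi)=V_A^c(M)$. Feeding this back into the displayed union gives $V_A^c(M{\da_R\ua^A}) = \bigcup_\chi V_A^c(M) = V_A^c(M)$.

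As an alternative, avoiding any careful bookkeeping with the $\coh^*(A,k)$-action, one can argue on the rank variety side instead: since $\Psi$ is a homeomorphism identifying $V_A^c$ with the rank variety $V_A$, and since the one-dimensional module $S_\chi$ restricted to each quantum cyclic shifted subgroup $k\langle\tau_{\ul}(t)\rangle$ is the trivial (hence nonprojective, one-dimensional) module, the restriction of $M\ot S_\chi$ to $k\langle\tau_{\ul}(t)\rangle$ is projective if and only if the restriction of $M$ is — using that $S_\chi$ is invertible under tensor product so tensoring with it is exact and reflects projectivity on the restricted algebra. This directly yields $V_A(M\ot S_\chi)=V_A(M)$ and the same conclusion. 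Either way, the main obstacle is the verification that the abstract $k$-linear isomorphisms in Lemma~\ref{lem:Ext}(i) genuinely respect the $\coh^*(A,k)$-module structures when one of the tensor factors is the invertible module $S_\chi$; once that is granted (and it is, by the statement of Lemma~\ref{lem:Ext}), the rest is a one-line union argument.
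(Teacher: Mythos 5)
Your main argument is correct and is essentially the paper's own proof: decompose $M{\da_R\ua^A}\simeq\bigoplus_{\chi}(M\ot S_\chi)$ via Lemma~\ref{lem:simples}(i), reduce to $V_A^c(M\ot S_\chi)=V_A^c(M)$ via Lemma~\ref{lem:Ext}(i) and $S_\chi\ot S_\chi^{\#}\simeq k$, and take the union by Proposition~\ref{properties}(i). (The sketched rank-variety alternative is more delicate than you suggest, since $k\langle\tau_{\ul}(t)\rangle$ is not a Hopf subalgebra so the restriction of $M\ot S_\chi$ does not simply split as a tensor of restrictions, but your primary argument is self-contained and does not rely on it.)
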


\begin{proof}
By Lemma~\ref{lem:Ext}(i), there is an isomorphism of $\coh^*(A,k)$-modules, 
\begin{equation}
\label{eq:tensor}
\Ext_A^*(M\ot S_{\chi}, M\ot S_{\chi}) \simeq \Ext_A^*(M, M\ot S_{\chi} \ot S_{\chi}^\# ) 
  \simeq \Ext_A^*(M,M) , 
\end{equation} 
with the action of $\coh^*(A,k)$ given by tensoring  on the right
(by $M\ot S_{\chi}$, respectively, by $M$) 
followed by Yoneda composition. 
The isomorphism \eqref{eq:tensor} implies that
$V_A^c(M)=V_A^c(M\ot S_{\chi})$ for each $\chi\in\Irr G$. 
Applying Lemma~\ref{lem:simples}(i) and Proposition~\ref{properties}(i), we thus have
$V^c_A(M{\downarrow_R\uparrow^A})= \cup_{\chi\in\Irr G} V^c_A(M\ot S_{\chi})
=V^c_A(M)$. 
\end{proof}

Since $V_R(M{\da_R}) = V_A(M{\da_R \ua^A})$ by definition, Lemma~\ref{lem:res-ind} immediately implies the following. 
\begin{cor}
\label{cor:equal} Let $M$ be an $A$-module. Then 
\[V_R(M{\da_R}) = V_A(M).\]
\end{cor}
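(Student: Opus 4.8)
The statement to prove is Corollary~\ref{cor:equal}: for an $A$-module $M$, we have $V_R(M{\da_R}) = V_A(M)$.

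The plan is to unwind the definitions and reduce everything to Lemma~\ref{lem:res-ind}. First I would recall that by the very definition of the support variety of an $R$-module given in Section~\ref{recollections}, namely $V_R(N) := V_A(N{\ua^A})$ for an $R$-module $N$, applying this with $N = M{\da_R}$ gives immediately
\[
V_R(M{\da_R}) = V_A\bigl( (M{\da_R}){\ua^A} \bigr) = V_A(M{\da_R\ua^A}).
\]
So the left-hand side is, by definition, the variety of the module $M{\da_R\ua^A}$ obtained by restricting $M$ to $R$ and then inducing back up to $A$.

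Next I would invoke Lemma~\ref{lem:res-ind}, which asserts precisely that $V_A^c(M{\da_R\ua^A}) = V_A^c(M)$. Since by the main theorem of \cite{PvW09} the homeomorphism $\Psi$ identifies the cohomological support variety $V_A^c(-)$ with the rank variety $V_A(-)$, and we have agreed throughout to make this identification, the equality $V_A^c(M{\da_R\ua^A}) = V_A^c(M)$ is the same as $V_A(M{\da_R\ua^A}) = V_A(M)$. Combining this with the display above yields $V_R(M{\da_R}) = V_A(M{\da_R\ua^A}) = V_A(M)$, which is the desired conclusion.

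There is no real obstacle here; the corollary is a formal consequence of Lemma~\ref{lem:res-ind} together with the definition of $V_R$. The only point requiring a modicum of care is the bookkeeping between $V_A^c$ and $V_A$: Lemma~\ref{lem:res-ind} is phrased in terms of cohomological support varieties, while the statement of the corollary is phrased in terms of the (rank) variety $V_A$, so one should note explicitly that these agree under $\Psi$. Since this identification was fixed once and for all in Section~\ref{recollections}, the proof is genuinely one line once Lemma~\ref{lem:res-ind} is in hand, and indeed the excerpt itself signals this by saying the corollary follows "immediately."
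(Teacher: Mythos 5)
Your proof is correct and follows exactly the same route as the paper: unwind the definition $V_R(N) = V_A(N{\ua^A})$ with $N = M{\da_R}$, then apply Lemma~\ref{lem:res-ind}. Your extra remark about the $\Psi$-identification between $V_A^c$ and $V_A$ is harmless bookkeeping that the paper leaves implicit.
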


\begin{thm}\label{QEAG}
Let $M$ be an $A$-module. Then 
\[V_A^c(M)=V_A^c(M^{\#}).\]
\end{thm}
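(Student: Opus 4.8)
The plan is to reduce the statement about $M^{\#}$ to the already-established Lemma~\ref{lem:res-ind}, namely $V_A^c(M{\da_R\ua^A}) = V_A^c(M)$, by showing that $M^{\#}$ and $M$ become isomorphic (up to tensoring with simples) after the operation $M \mapsto M{\da_R\ua^A}$, and then using that this operation does not change the variety. Concretely, I would first observe that $M{\da_R}$, as an $R$-module, has the same underlying space as $M$ with the $X_i$ acting the same way, so dualizing over $k$ and using that $R$ is a commutative (truncated polynomial) algebra with $X_i \mapsto -g_i^{-1}X_i$ under $S$, one gets that $(M^{\#}){\da_R}$ is, as an $R$-module, isomorphic to $(M{\da_R})^{\#}$, and the latter has a variety over $R$ equal to $V_R(M{\da_R})$ by the standard (commutative) theory for truncated polynomial rings — dual modules have the same rank variety there because $k[X_i]/(X_i^\ell)$ is self-injective and the rank of the action of a shifted subgroup element on $M$ equals that on $M^{\#}$. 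Hence $V_R((M^{\#}){\da_R}) = V_R(M{\da_R})$.

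Next I would invoke Corollary~\ref{cor:equal}, which says $V_R(N{\da_R}) = V_A(N)$ for any $A$-module $N$. Applying this to $N = M$ and to $N = M^{\#}$ gives
\[
V_A(M^{\#}) = V_R\big((M^{\#}){\da_R}\big) = V_R\big((M{\da_R})^{\#}\big) = V_R(M{\da_R}) = V_A(M),
\]
where the middle equality is the commutative-case fact just described. Finally, since the main theorem of \cite{PvW09} identifies $V_A$ with $V_A^c$ via the homeomorphism $\Psi$, the equality $V_A(M^{\#}) = V_A(M)$ is equivalent to $V_A^c(M^{\#}) = V_A^c(M)$, which is the assertion. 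Alternatively, one can work directly on the cohomological side: using Lemma~\ref{lem:simples}(ii), $(M{\da_R\ua^A})^{\#} \simeq \bigoplus_{\chi} (M^{\#}\ot S_{\chi})$, and the same argument as in the proof of Lemma~\ref{lem:res-ind} (tensoring with a one-dimensional $S_{\chi}$ does not change the variety, by Lemma~\ref{lem:Ext}(i)) shows $V_A^c((M{\da_R\ua^A})^{\#}) = V_A^c(M^{\#})$; combined with $V_A^c((M{\da_R\ua^A})^{\#}) = V_A^c(M{\da_R\ua^A})$ — which holds because $M{\da_R\ua^A}$ is induced from $R$, hence its dual is coinduced, and induction and coinduction from $R$ to $A$ agree up to the Nakayama twist, which is trivial on varieties — and Lemma~\ref{lem:res-ind}, one gets $V_A^c(M^{\#}) = V_A^c(M)$.

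The step I expect to be the main obstacle is making precise the claim that duality does not affect the variety \emph{at the level of} $R$, i.e. that $V_R((M{\da_R})^{\#}) = V_R(M{\da_R})$, together with the compatibility of $k$-duality with restriction to $R$ — the subtlety flagged in the introduction (after \cite{BW}) is precisely that duals need not preserve varieties for general Hopf algebras, so the argument must genuinely exploit that $R$ is commutative and that the antipode $S$ sends $X_i$ into the $R$-submodule it generates up to the unit $-g_i^{-1}$, which acts invertibly. One must check that for a "quantum cyclic shifted subgroup" $\langle \tau_{\ul}(t)\rangle$, the element $\tau_{\ul}(t)$ acts on $M^{\#}$ with the same Jordan type (same ranks of powers) as on $M$; this follows because, over the self-injective algebra $k[t]/(t^\ell)$, $k$-duality sends a module to one with the transposed Jordan block structure, which has the same partition, so projectivity of the restriction is preserved. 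Once this local statement is in hand, the global conclusion is formal via Corollary~\ref{cor:equal} and the $\Psi$-identification, so I would organize the write-up to isolate that one local lemma and then assemble the pieces.
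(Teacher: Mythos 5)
Your main line of argument has a genuine gap precisely at the step you flag as the ``main obstacle,'' and I do not think it can be filled as easily as you suggest. The difficulty is that the action of $\tau_{\ul}(t)$ on $M^{\#}$ is by $f\mapsto f(S(\tau_{\ul}(t))\,\cdot\,)$, so its Jordan type on $M^{\#}$ equals the Jordan type of $S(\tau_{\ul}(t))$ on $M$, \emph{not} that of $\tau_{\ul}(t)$ on $M$. Writing $Y_i = X_ig_1\cdots g_{i-1}$, one computes $S(Y_i)=-u_iY_i$ with $u_i=(g_1^2\cdots g_{i-1}^2g_i)^{-1}$, so $S(\tau_{\ul}(t)) = -\sum_i\lambda_i u_iY_i$ with the $u_i$ distinct group elements. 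This is not a unit multiple of, nor conjugate to, any $\tau_{\ul'}(t)$; the antipode does not preserve the subalgebras $\langle\tau_{\ul}(t)\rangle$, and the ``transposed Jordan blocks'' argument for a single $k[t]/(t^\ell)$ does not apply. Relatedly, your claimed $R$-module isomorphism $(M^{\#}){\da_R}\simeq(M{\da_R})^{\#}$ is problematic: there is no canonical $R$-module structure on $\Hom_k(M{\da_R},k)$ (since $R$ is not a Hopf algebra when $\ell$ is prime to the characteristic), and with the naive transpose action the two modules are not isomorphic in general because $S(X_i)=-g_i^{-1}X_i$ carries a group element that cannot be absorbed by an isomorphism. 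Note also that $V_R$ is itself defined via induction to $A$, so ``the standard commutative theory'' of rank varieties for truncated polynomial rings does not directly apply to it. This is exactly the kind of pitfall the introduction warns about following \cite{BW}.

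Your alternative, cohomological argument is closer in spirit to the paper's proof, which also works with $N = M{\da_R\ua^A}$ and uses Lemmas~\ref{lem:simples} and~\ref{lem:res-ind}. However, the paper establishes the crucial equality $V_A^c(N)=V_A^c(N^{\#})$ for such induced $N$ by a different mechanism than the ``induction $=$ coinduction up to Nakayama twist'' route you sketch: it uses Lemma~\ref{lem:Ext} to get $V_A^c(k,M)=V_A^c(M^{\#},k)$, then exploits $N\ot S_\chi\simeq N$ for all $\chi$ together with Proposition~\ref{properties}(iv) to show $V_A^c(N)=V_A^c(k,N)=V_A^c(N,k)$, and assembles these into $V_A^c(N)=V_A^c(N^{\#})$. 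That argument is entirely at the level of relative cohomological support varieties and avoids both the $R$-module duality ambiguity and any appeal to Frobenius-extension/Nakayama considerations, which you would still need to verify (including that the twist is trivial on varieties). I would recommend abandoning the rank-variety/$R$-level approach here and reworking along those cohomological lines.
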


\begin{proof} 
By Lemma~\ref{lem:Ext}, there are isomorphisms of $\coh^*(A,k)$-modules: 
$\Ext_A^*(M^\#, k) \simeq \Ext_A^*(k, M^{\#\#}) \simeq \Ext_A^*(k,M)$ and, hence, 
$V_A^c(k,M) =V_A^c(M^{\#},k)$.

Now let $M$ be any  $A$-module, and 
$N = M{\downarrow_R\uparrow^A}$. By 
Proposition~\ref{properties}(iv) and Lemma~\ref{lem:simples}(i), 
$$
    V_A^c(N) =  \ \bigcup_{\chi\in\Irr G} V_A^c(S_{\chi},N) \ = \
     \bigcup_{\chi\in\Irr G} V_A^c(k, N\ot S_{\chi}^{\#}) \
   = V_A^c(k,N),
$$
since $N\ot S_{\chi}^{\#}\simeq N$ for all $\chi\in\Irr G$ 
(as $N\simeq \oplus_{\chi\in\Irr G}M\ot S_{\chi}$).
Similarly we find that $V_A^c(N) = V_A^c(N,k)$. 
It follows, from the previous observation, that
$V_A^c(N) = V_A^c(k,N) = V_A^c(N^{\#},k)$.
By Lemma~\ref{lem:simples}(ii), 
there is an isomorphism $N^{\#}\ot S_{\chi}^{\#} \simeq N^{\#}$ for each
$\chi\in\Irr G$, and so by a similar argument to the above,
we have $V_A^c(N^{\#})= V_A^c(N^{\#},k)$. Thus we have shown that
$V_A^c(N)=V_A^c(N^{\#})$.

By the above observations and 
Lemmas~\ref{lem:simples}(ii) and \ref{lem:res-ind},  we now have
\begin{eqnarray*}
  V_A^c(M) \ \ = \ \ V_A^c(M{\downarrow_R\uparrow^A})  &=&
   V_A^c((M{\downarrow_R\uparrow^A})^{\#})\\
    &=& V_A^c\left( \bigoplus_{\chi\in\Irr G} (M^{\#}\ot S_{\chi})\right) \\
    &=& V_A^c( M^{\#}{\downarrow_R\uparrow^A})  
    \ \ = \ \  V_A^c(M^{\#}).
\end{eqnarray*}
\end{proof}

\section{Tensor product property}\label{product}

In this section, we prove that for a quantum elementary abelian group $A$, 
the variety of a 
tensor product of  $A$-modules is the intersection of their varieties, as stated in Theorem~\ref{thm:tensor}. 
First, in Corollary~\ref{tp},
we fix the proof of Proposition~4.3(vi) from \cite{PvW09}, establishing the inclusion $V_A^c(M\ot N) \subset V_A^c(M)\cap V_A^c(N)$. Since the proof is entirely cohomological and applies more generally than for a quantum elementary abelian group, we stay with the notation $V^c_A$ for this part of the proof. 
The inclusion $V_A^c(M \ot N) \subset V_A^c(M)$ is a formal consequence of the definition of the cohomological support variety and the action of $\coh^*(A,k)$ on $\Ext_A^*(M,M)$. The inclusion for the second factor, $V_A^c(M\otimes N) \subset V_A^c(N)$, requires more careful consideration since the tensor product is not symmetric. Our approach is based on Theorem~\ref{QEAG} stating that $V_A^c(M) = V_A^c(M^\#)$ when $A$ is a quantum elementary abelian group. 

The proof of the opposite inclusion $V_A(M)\cap V_A(N) \subset V_A(M\ot N)$  uses both the cohomological and the non-cohomological description of the variety.   We observe that if $A$ is a quantum elementary abelian group, then the tensor product $A \otimes A$ is again a quantum elementary abelian group, and we check
in Lemma~\ref{inverse}  that the rank variety behaves well with respect to restriction along the map induced by the coproduct $A \to A\otimes A$. 
Using the naturality of the isomorphism between rank and support varieties with respect to the coproduct  as described in \eqref{eq:comm},  we deduce the ``restriction" property for the cohomological support varieties $V_A^c(M)$. The rest of the proof goes via standard arguments using the K\"unneth formula, and employs only the cohomological support.

\subsection{The inclusion $V_A^c(M\ot N) \subset V_A^c(M)\cap V_A^c(N)$} 
Similarly to Lemma~\ref{lem:Ext}, the next two results only require $A$ to be a finite dimensional Hopf algebra
for which the cohomological support varieties are defined (for example, it suffices to assume that $\coh^*(A,k)$ 
is finitely generated), so we state them in this greater generality. 
  
Recall that we define the action of $\coh^*(A,k)$ on $\Ext_A^*(M,M)$  via tensoring an extension with $M$ on the right followed by Yoneda composition, and then define the varieties $V_A^c(M)$ using that action.  In the following theorem we verify that defining the action by $M \otimes -$ followed by Yoneda composition will lead to the same result under the assumption that $V_A^c(M) = V_A^c(M^\#)$. 

\begin{thm}\label{same} Let $A$ be a finite dimensional Hopf algebra for which the cohomological support varieties 
are defined and the two notions of dual module (one using the antipode $S$ and the other using its inverse $S^{-1}$)
coincide. Let $M$ be an $A$-module for which $V_A^c(M) = V_A^c(M^{\#})$.  Then the closed subvariety of $\Proj \coh^*(A,k)$, defined by the annihilator ideal of the action of $\coh^*(A,k)$ on $\Ext_A^*(M,M)$ given by $M\ot -$ followed by Yoneda composition, coincides with $V_A^c(M)$.
\end{thm}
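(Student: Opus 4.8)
The plan is to relate the ``right action'' of $\coh^*(A,k)$ on $\Ext_A^*(M,M)$ (via $-\ot M$) to the ``left action'' (via $M\ot -$) by passing through the dual module $M^\#$ and invoking the hypothesis $V_A^c(M) = V_A^c(M^\#)$. First I would set up notation: write $V_A^{c,r}(M)$ for the variety defined by the annihilator of the right action ($-\ot M$ followed by Yoneda composition), which is the variety $V_A^c(M)$ of Definition~\ref{de:cohom}, and $V_A^{c,l}(M)$ for the variety defined by the annihilator of the left action ($M\ot -$ followed by Yoneda composition); the goal is $V_A^{c,l}(M) = V_A^c(M)$.

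The key step is to produce an isomorphism of $\coh^*(A,k)$-modules
\[
  \Ext_A^*(M,M) \;\simeq\; \Ext_A^*(M^\#, M^\#)
\]
that intertwines the \emph{left} action on the left-hand side with the \emph{right} action on the right-hand side. Such an isomorphism should come from $k$-linear duality $f \mapsto f^\#$ on homomorphism spaces, applied to a projective resolution: if $P_\DOT \to k$ is a projective resolution, then dualizing sends a chain map computing an element of $\Ext_A^*(M,M)$ (realized on $M\ot P_\DOT \to M$, the resolution used for the left action) to a chain map on $P_\DOT^\# \ot M^\# \to M^\#$, i.e.\ an element of $\Ext_A^*(M^\#,M^\#)$ computed with respect to the resolution $M^\#\ot (\text{proj}) \to M^\#$ giving the right action on $M^\#$. (Here one uses that the $A$-module dual of a projective is projective, since $A$ is Frobenius, and that $M^{\#\#}\simeq M$.) One then checks that tensoring an extension $\zeta \in \coh^*(A,k)$ on the left with $M$ and dualizing matches tensoring $\zeta^\# \simeq \zeta$ — using that $\coh^*(A,k)$ is graded-commutative and that the dual of an extension of $k$ by $k$ corresponds to the same cohomology class — on the right with $M^\#$; this is the compatibility that makes the duality $\coh^*(A,k)$-equivariant in the required twisted sense. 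This step is essentially a diagram-chase on chain complexes analogous to, but distinct from, the computations in Lemma~\ref{lem:Ext}.

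Granting that isomorphism, the annihilator of the left action of $\coh^*(A,k)$ on $\Ext_A^*(M,M)$ equals the annihilator of the right action of $\coh^*(A,k)$ on $\Ext_A^*(M^\#,M^\#)$, so
\[
  V_A^{c,l}(M) \;=\; V_A^c(M^\#).
\]
Now the hypothesis $V_A^c(M) = V_A^c(M^\#)$ closes the argument: $V_A^{c,l}(M) = V_A^c(M^\#) = V_A^c(M)$, which is exactly the claim. I would remark that only the radical of the annihilator ideal matters for the variety, so it suffices to match the two actions up to a nilpotent discrepancy, which relaxes the bookkeeping in the chain-level computation.

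The main obstacle I expect is the equivariance check in the second paragraph: verifying carefully that $k$-linear duality carries the left $\coh^*(A,k)$-action on $\Ext_A^*(M,M)$ to the right action on $\Ext_A^*(M^\#,M^\#)$ rather than to some twisted or opposite action. Since the tensor product is non-symmetric, one must track the sides of tensor factors and the use of the antipode (and $S$ versus $S^{-1}$, which is why the hypothesis that the two duals coincide is imposed) throughout; a sign or side error here would be fatal. The safest route is probably to realize everything via explicit dual bases on a fixed projective resolution of $k$, exactly as in the proof of Lemma~\ref{lem:Ext}, and grind the compatibility out at the chain level, or alternatively to deduce it abstractly from Lemma~\ref{lem:Ext}(i) and (ii) applied with suitable choices of $U,V,W$ together with Theorem~\ref{QEAG}-style manipulations.
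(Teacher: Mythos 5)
Your high-level strategy is exactly the paper's: show that the annihilator of the left ($M\ot -$) action on $\Ext_A^*(M,M)$ cuts out the variety $V_A^c(M^\#)$, then invoke the hypothesis $V_A^c(M)=V_A^c(M^\#)$. Where you diverge is in how you propose to realize that identification. Your preferred mechanism, $k$-linear duality $f\mapsto f^\#$ on chain maps, is workable in principle but has real bookkeeping hazards, some of which already surface in your sketch: dualizing the complex $M\ot P_{\bu}$ gives $P_{\bu}^\#\ot M^\#$, not $M^\#\ot(\text{proj})$, whereas the resolution used for the right action on $M^\#$ is $P_{\bu}\ot M^\#$; moreover $P_{\bu}^\#$ is a priori an injective coresolution of $k$, so identifying it with a projective resolution requires a reindexing, and duality reverses the order of Yoneda composition, so what you get directly is an anti-isomorphism $\Ext_A^*(M,M)^{\op}\simeq\Ext_A^*(M^\#,M^\#)$ rather than an isomorphism. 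None of these is fatal for a graded-commutative acting ring, but each needs to be addressed, and you correctly identify this equivariance check as the main obstacle.

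The paper instead takes what you call the ``safer route.'' It applies Lemma~\ref{lem:Ext}(ii) with $U=M$, $V=k$, $W=M$ to transport the $M\ot -$ action on $\Ext_A^*(M,M)$ to the plain Yoneda action on $\Ext_A^*(k, M^\#\ot M)$; it then rewrites $M\simeq M^{\#\#}$ (this is where the hypothesis that the $S$- and $S^{-1}$-duals agree enters) and applies Lemma~\ref{lem:Ext}(i) with $U=k$, $V=M^\#$, $W=M^\#$ to transport the Yoneda action on $\Ext_A^*(k, M^\#\ot M^{\#\#})$ to the $-\ot M^\#$ action on $\Ext_A^*(M^\#,M^\#)$. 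This composite of module isomorphisms identifies the annihilator ideals on the nose (so the ``up to radical'' relaxation you mention, while harmless, is not needed), giving immediately that the left-action variety of $M$ equals $V_A^c(M^\#)=V_A^c(M)$. So your proposal is essentially correct and even anticipates the paper's route as the abstract alternative; if you carry it out, do so via the Lemma~\ref{lem:Ext} manipulations rather than the duality-of-chain-maps computation.
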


\begin{proof}
By Lemma \ref{lem:Ext}, 
the action of $\coh^*(A,k)$ on $\Ext^*_A(M,M)$ given by $M\ot -$ followed by
Yoneda composition corresponds to that on $\Ext^*_A(k, M^{\#}\ot M)$ given by
Yoneda composition, under the isomorphism $\Ext^*_A(M,M)\simeq \Ext^*_A(k, M^{\#}\ot M)$.
There is a further isomorphism $\Ext^*_A(k, M^{\#}\ot M) \simeq \Ext^*_A(k, M^{\#}\ot M^{\#\#})\simeq \Ext^*_A(M^{\#},M^{\#})$,
under which the action of $\coh^*(A,k)$ by Yoneda composition on the former corresponds
to that given on the latter by $ - \ot M^{\#}$ followed by Yoneda composition.
The variety $V^c_A(M^{\#})$ is defined by the annihilator of this action, and since
$V^c_A(M) = V^c_A(M^{\#})$, the variety $V^c_A(M)$ is defined by the annihilator of
the first action given above. 
\end{proof}

\begin{cor}\label{tp} Let $A$ be as in Theorem~\ref{same}.  
If $V_A^c(M)=V_A^c(M^{\#})$ for all  $A$-modules $M$,
then
$$
  V_A^c(M\ot N) \subset V_A^c(M)\cap V_A^c(N)
$$
for all  $A$-modules $M,N$.
\end{cor}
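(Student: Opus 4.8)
The plan is to reduce the two-sided inclusion to two one-sided statements and handle each using the tools already assembled. For the inclusion $V_A^c(M\ot N)\subset V_A^c(M)$: this is the ``easy'' half and follows directly from the definition of the cohomological support variety. Indeed, $\coh^*(A,k)$ acts on $\Ext_A^*(M\ot N, M\ot N)$ via $-\ot(M\ot N)$ followed by Yoneda composition, and there is a ring homomorphism compatibility making the identity map $\Ext_A^*(M,M)\to\Ext_A^*(M,M)$ fit into a commuting picture: tensoring an extension of $M$ by $M$ on the right with $N$ yields an extension of $M\ot N$ by $M\ot N$, and this operation $-\ot N$ is a homomorphism of $\coh^*(A,k)$-modules compatible with Yoneda composition. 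Hence $I_A(M)\subset I_A(M\ot N)$, which gives $V_A^c(M\ot N)\subset V_A^c(M)$. I would want to state this as a short separate observation (it is really just functoriality of $-\ot N$).

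For the inclusion $V_A^c(M\ot N)\subset V_A^c(N)$: here I would invoke Theorem~\ref{same}. By that theorem, since $V_A^c(M\ot N) = V_A^c((M\ot N)^{\#})$ by hypothesis, the variety $V_A^c(M\ot N)$ is equally well computed using the action of $\coh^*(A,k)$ on $\Ext_A^*(M\ot N, M\ot N)$ given by $(M\ot N)\ot-$ followed by Yoneda composition. Now I run the dual of the argument above: tensoring an extension of $N$ by $N$ on the \emph{left} with $M$ produces an extension of $M\ot N$ by $M\ot N$, and this gives a $\coh^*(A,k)$-module homomorphism $\Ext_A^*(N,N)\to\Ext_A^*(M\ot N, M\ot N)$ (where the source carries the $N\ot-$ action and the target the $(M\ot N)\ot-$ action) compatible with the two Yoneda structures. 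Therefore $I_A^{\ell}(N)\subset I_A^{\ell}(M\ot N)$, where $I_A^\ell$ denotes the annihilator for the ``left'' action; by Theorem~\ref{same} applied to both $N$ and $M\ot N$ (legitimate since the hypothesis $V_A^c(-)=V_A^c((-)^{\#})$ is assumed for all modules), these left-annihilator ideals cut out $V_A^c(N)$ and $V_A^c(M\ot N)$ respectively. Combining the two inclusions yields $V_A^c(M\ot N)\subset V_A^c(M)\cap V_A^c(N)$.

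The main obstacle, and the reason the corollary is not entirely formal, is precisely the asymmetry of the tensor product: without the hypothesis $V_A^c(M)=V_A^c(M^{\#})$ one has no way to relate $V_A^c(M\ot N)$ to $V_A^c(N)$, since the natural action of $\coh^*(A,k)$ coming from $-\ot(M\ot N)$ ``sees'' $N$ only through the right tensor factor and cannot detect $N$'s cohomology on the left. Theorem~\ref{same} is exactly the device that converts the right action into the left action at the level of varieties, and it is available only under the duality hypothesis. So the careful point to get right is the bookkeeping of which side one tensors on and the invocation of Theorem~\ref{same} for the module $M\ot N$ (using that the duality hypothesis holds for \emph{all} modules, hence in particular for $M\ot N$). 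Once that is in place, the functoriality arguments for $-\ot N$ and $M\ot-$ are routine.
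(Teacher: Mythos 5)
Your proof is correct, and it reaches the same conclusion from the same fundamental input (the duality hypothesis applied to both $M\ot N$ and $N$), but the execution differs from the paper's. The paper's proof is three lines: it dualizes the module directly, writing $V_A^c(M\ot N)=V_A^c\bigl((M\ot N)^{\#}\bigr)=V_A^c(N^{\#}\ot M^{\#})$, then applies the ``easy'' right-factor inclusion to conclude $V_A^c(N^{\#}\ot M^{\#})\subset V_A^c(N^{\#})=V_A^c(N)$. You instead route through Theorem~\ref{same} explicitly, replacing the standard right cup-product action with the left action $(M\ot N)\ot -$ at the level of varieties, and then run the analogue of the easy inclusion via functoriality of $M\ot -$. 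The two routes are close cousins --- Theorem~\ref{same} is itself proved by dualization, so both proofs ultimately trade a right tensor factor for a left one using $V_A^c(-)=V_A^c((-)^{\#})$ --- but yours makes the left-action mechanism explicit, whereas the paper's is shorter because it works at the level of modules rather than actions. Two small points to tighten: the phrase ``the identity map $\Ext_A^*(M,M)\to\Ext_A^*(M,M)$'' in your easy-inclusion paragraph should be the map $-\ot N:\Ext_A^*(M,M)\to\Ext_A^*(M\ot N,M\ot N)$; and the containments $I_A(M)\subset I_A(M\ot N)$ and $I_A^{\ell}(N)\subset I_A^{\ell}(M\ot N)$ do not follow merely from having a $\coh^*(A,k)$-module homomorphism (that would give a containment the wrong way for images) --- you should note that the homomorphism sends $1_M$ to $1_{M\ot N}$ (respectively $1_N$ to $1_{M\ot N}$), so the annihilator of the source, which coincides with the annihilator of its identity element, is contained in the annihilator of the target's identity element and hence in the full annihilator of the target.
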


\begin{proof}
Since dualizing reverses the order of tensor product,
$$
  V_A^c(M\ot N) = V_A^c((M\ot N)^{\#}) = V_A^c(N^{\#}\ot M^{\#}).
$$
By the definition of support variety, $V_A^c(M\ot N)\subset V_A^c(M)$
and $V_A^c(N^{\#}\ot M^{\#})\subset V_A^c(N^{\#})$, and under
our hypothesis, $V_A^c(N^{\#})= V_A^c(N)$. 
\end{proof}

Now one inclusion in the tensor product theorem in the special case of a quantum elementary abelian group is an immediate consequence of Theorem~\ref{QEAG}  
and Corollary~\ref{tp}.
\begin{cor} Let $A$ be a quantum elementary abelian group, and let $M$, $N$ be two $A$-modules. Then $V_A^c(M\otimes N) \subset V_A^c(M) \cap V_A^c(N)$. 
\end{cor}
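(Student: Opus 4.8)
The plan is to derive this corollary immediately from the two preceding results, since all the work has already been done. Recall that a quantum elementary abelian group $A$ satisfies the two hypotheses of Theorem~\ref{same} and Corollary~\ref{tp}: it is a finite dimensional Hopf algebra, the cohomology ring $\coh^*(A,k) \simeq k[y_1,\ldots,y_n]$ is finitely generated so the cohomological support varieties are defined, and, as noted right after the definition of $M^\#$ in Section~\ref{recollections}, the square of the antipode is an inner automorphism (conjugation by $g_1^{-1}\cdots g_n^{-1}$), so the two notions of dual module — using $S$ and using $S^{-1}$ — coincide.

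The key input is Theorem~\ref{QEAG}, which asserts that $V_A^c(M) = V_A^c(M^\#)$ for \emph{every} $A$-module $M$ when $A$ is a quantum elementary abelian group. This is precisely the hypothesis of Corollary~\ref{tp}. So the plan is simply: first invoke the structural facts above to verify $A$ satisfies the hypotheses of Theorem~\ref{same} (hence Corollary~\ref{tp}); then apply Theorem~\ref{QEAG} to supply the condition ``$V_A^c(M) = V_A^c(M^\#)$ for all $A$-modules $M$''; then quote Corollary~\ref{tp} directly to conclude $V_A^c(M \otimes N) \subset V_A^c(M) \cap V_A^c(N)$ for all $A$-modules $M$, $N$.

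There is essentially no obstacle here — the statement is a formal assembly of Theorem~\ref{QEAG} and Corollary~\ref{tp}, which is exactly why the paragraph preceding it in the excerpt says the inclusion is ``an immediate consequence.'' If anything, the only thing worth a sentence is the remark that the subtlety hidden in this chain lies entirely in Theorem~\ref{QEAG} (whose proof in turn depends on Lemmas~\ref{lem:simples} and \ref{lem:res-ind}, i.e.\ on the special module-theoretic structure $M{\downarrow_R\uparrow^A} \simeq \bigoplus_\chi M\otimes S_\chi$), and that for a general Hopf algebra the conclusion can fail precisely because $V_A^c(M)$ and $V_A^c(M^\#)$ need not agree — as the counterexamples of Benson and the second author in \cite{BW} show. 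The proof I would write is therefore two or three lines: ``$A$ is a finite dimensional Hopf algebra with $\coh^*(A,k)$ finitely generated and $S^2$ inner, so it satisfies the hypotheses of Theorem~\ref{same}; by Theorem~\ref{QEAG}, $V_A^c(M) = V_A^c(M^\#)$ for all $A$-modules $M$; now apply Corollary~\ref{tp}.''
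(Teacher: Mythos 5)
Your proposal is correct and matches the paper exactly: the paper simply notes, in the sentence preceding the corollary, that it is ``an immediate consequence of Theorem~\ref{QEAG} and Corollary~\ref{tp},'' which is precisely the two-step assembly you describe. Your extra care in verifying the hypotheses of Theorem~\ref{same} (finitely generated cohomology, $S^2$ inner so the two duals coincide) is a sensible addition but not a departure from the paper's reasoning.
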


\subsection{The equality} 
For the rest of the paper, $A$ again denotes a quantum elementary abelian group.  
We choose the following ordered generating
set for $A\ot A$:
\begin{eqnarray*}
  &&X_1\ot 1 , \ 1\ot X_1, \ X_2\ot 1, \ 1\ot X_2, \ldots, \ X_n\ot 1, 
    \ 1\ot X_n,\\
  &&g_1\ot 1 , \ \ 1\ot g_1, \ \ g_2\ot 1, \ \ 1\ot g_2, \ldots, 
  \ \ g_n\ot 1, \ \   1\ot g_n.
\end{eqnarray*}
Under this choice, letting $\tau_{\ul}(t)\in A$ as
defined in Section \ref{recollections},
we find that in $A\ot A$,
$$
\tau_{(\lambda_1,\lambda_1,\ldots , \lambda_n,\lambda_n)}(t)
\hspace{5in}
$$

\vspace{-.2in}

\begin{eqnarray*}
&=& \lambda_1 X_1\!\ot\! 1 +\lambda_1g_1\!\ot\! X_1 + \lambda_2X_2g_1\!\ot\! g_1
  +\lambda_2g_1g_2\!\ot\! X_2g_1+\cdots \\
 &&\hspace{1in}+\lambda_n X_ng_1\cdots g_{n-1}\!\ot\!
  g_1\cdots g_{n-1}+\lambda_ng_1\cdots g_n\!\ot\! X_ng_1\cdots g_{n-1}\\
&=& \Delta(\tau_{(\lambda_1,\ldots,\lambda_n)}(t)).
\end{eqnarray*}
Accordingly we define a map on rank varieties,
 $\Delta^r: V_A\rightarrow V_{A\ot A}$, by
$$
  \Delta^r [\lambda_1:\lambda_2:\cdots : \lambda_n]
   = [\lambda_1:\lambda_1:\lambda_2:\lambda_2:\cdots :\lambda_n:\lambda_n].
$$

We denote the map on cohomology induced by the coproduct $\Delta: A \to A\ot A$ by 
\[\Delta^*: \coh^*(A,k)\otimes   \coh^*(A,k) \simeq \coh^*(A\ot A,k) \rightarrow \coh^*(A,k)\]
(where the first isomorphism is given by the K\"unneth theorem), 
and the corresponding map on support varieties by 
\[\Delta_*:V_A^c \to  V_{A \ot A}^c \simeq V_A^c \times V_A^c.\]
This map arises from the cup product on the graded commutative
algebra $\coh^*(A,k)$, and so is the diagonal map.

\begin{lem}\label{commutes}
The following diagram commutes:
\begin{equation}
\label{eq:comm}
\begin{xy}*!C\xybox{
\xymatrix{
  V_A \ar[r]^{\Delta^r} 
   \ar[d]^{\psi_A}  & 
  V_{A\ot A} \ar[d]^{\psi_{A\ot A}}\\
  V^c_A\ar[r]^{\Delta_*} & V^c_{A\ot A}
}}
\end{xy}
\end{equation}
\end{lem}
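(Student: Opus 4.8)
The plan is to unwind all four maps in the square \eqref{eq:comm} on an explicit point $\ul = [\lambda_1:\cdots:\lambda_n] \in V_A = \bP^{n-1}/G$ and simply check that the two composites agree. The right-hand and bottom maps are already pinned down: $\psi_{A\ot A}$ is the homeomorphism $\Psi$ of \cite{PvW09} for the quantum elementary abelian group $A\ot A$ (which has $2n$ variables in the evident pairwise-interleaved order fixed above), and $\Delta_* $ is the diagonal map $V_A^c \to V_A^c\times V_A^c \simeq V_{A\ot A}^c$. So going down-then-right sends $\ul$ first to $\Psi(\ul) = [\lambda_1^\ell:\cdots:\lambda_n^\ell]$ and then, via the diagonal, to $[\lambda_1^\ell:\lambda_1^\ell:\lambda_2^\ell:\lambda_2^\ell:\cdots:\lambda_n^\ell:\lambda_n^\ell] \in V_{A\ot A}^c$.

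Going right-then-down, $\Delta^r$ sends $\ul$ to $[\lambda_1:\lambda_1:\lambda_2:\lambda_2:\cdots:\lambda_n:\lambda_n] \in V_{A\ot A} = \bP^{2n-1}/(G\times G)$, and then $\psi_{A\ot A} = \Psi_{A\ot A}$ raises each of the $2n$ coordinates to the $\ell$th power, giving $[\lambda_1^\ell:\lambda_1^\ell:\cdots:\lambda_n^\ell:\lambda_n^\ell]$. These match on the nose, so the diagram commutes. Two small points need to be recorded to make this rigorous: first, that $\Delta^r$ is well defined on the quotient $\bP^{n-1}/G$ and lands in $\bP^{2n-1}/(G\times G)$ — which is clear since the diagonal $G \hookrightarrow G\times G$ acting diagonally on $\bP^{2n-1}$ matches the $G$-action on the image, and a routine check shows $\Delta^r$ is even induced by a morphism of varieties $\bP^{n-1}\to\bP^{2n-1}$ before passing to quotients; second, that the identification $V_{A\ot A}^c \simeq V_A^c\times V_A^c$ coming from the K\"unneth isomorphism $\coh^*(A\ot A,k)\simeq \coh^*(A,k)\ot\coh^*(A,k) \simeq k[y_1,\dots,y_n,y_1',\dots,y_n']$ is compatible with the ordered generating set chosen for $A\ot A$, so that the $\ell$th-power description of $\Psi_{A\ot A}$ is the one written above.

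The only genuine content — and the step I expect to be the main obstacle — is verifying that the explicit formula for $\Delta^r$ is the \emph{correct} map on rank varieties, i.e.\ that restricting an $A$-module $M$ along $\Delta\colon A\to A\ot A$ and then to the quantum cyclic shifted subgroup $\langle \tau_{\ul}(t)\rangle \subset A$ is compatible with restricting $M\boxtimes$(something) along $\langle \tau_{\Delta^r\ul}(t)\rangle \subset A\ot A$. This is exactly the computation displayed just before the lemma: one checks that $\Delta(\tau_{\ul}(t)) = \tau_{(\lambda_1,\lambda_1,\dots,\lambda_n,\lambda_n)}(t)$ inside $A\ot A$, using the coproduct formulas $\Delta(X_i) = X_i\ot 1 + g_i\ot X_i$ and the isomorphism $Y_i \mapsto X_i g_1\cdots g_{i-1}$, together with the fact (to be invoked from \cite{PvW09}, and presumably isolated as the preceding Lemma~\ref{inverse}) that restriction along $\Delta$ detects projectivity of the relevant restrictions correctly. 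Once that naturality of $\tau$ under the coproduct is in hand, the commutativity of \eqref{eq:comm} is just the bookkeeping of raising coordinates to the $\ell$th power described above. I would therefore present the proof as: (1) recall $\psi_{A\ot A}=\Psi_{A\ot A}$ acts by $[\mu_i]\mapsto[\mu_i^\ell]$ on the chosen generators; (2) recall from the displayed computation that $\Delta^r$ corresponds under $\Psi$'s to the diagonal; (3) chase $\ul$ both ways and observe equality.
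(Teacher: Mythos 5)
Your diagram chase in the first two paragraphs is correct and is essentially the paper's argument: the proof there also treats this as an immediate consequence of the definitions of $\Delta^r$, $\Delta_*$, $\psi_A$, $\psi_{A\ot A}$, modulo the same caveat you make about ordering the degree-$2$ generators of $\coh^*(A\ot A,k)$ compatibly with the interleaved generating set chosen for $A\ot A$. Both composites send $[\lambda_1:\cdots:\lambda_n]$ to $[\lambda_1^\ell:\lambda_1^\ell:\cdots:\lambda_n^\ell:\lambda_n^\ell]$, and that is the whole proof.

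One clarification is worth making about your final paragraph. You locate the ``only genuine content'' in verifying that $\Delta^r$ is the \emph{correct} map in the sense that restriction of modules along $\Delta\colon A\to A\ot A$ interacts well with restriction to $\langle\tau_{\ul}(t)\rangle$. But that is not what Lemma~\ref{commutes} asserts: this lemma involves no modules at all, only four fixed maps between the fixed spaces $V_A$, $V_{A\ot A}$, $V_A^c$, $V_{A\ot A}^c$, and it is proved purely formally as you do in paragraphs one and two. The statement you are worrying about --- that $(\Delta^r)^{-1}V_{A\ot A}(M) = V_A(M)$ for an $A\ot A$-module $M$ restricted along $\Delta$ --- is exactly Lemma~\ref{inverse}(i), the \emph{next} lemma, and it is there (not here) that the displayed identity $\Delta(\tau_{\ul}(t)) = \tau_{(\lambda_1,\lambda_1,\ldots,\lambda_n,\lambda_n)}(t)$ is invoked. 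So the concern is real, but it belongs to a different statement; your actual proof of the present lemma is already complete without it.
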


\begin{proof}
This is an immediate consequence of the definitions of the diagonal maps
$\Delta_*$ and  of $\Delta^r$.
(One must order the basis of $\coh^2(A\ot A,k)$
in accordance with the ordering of the generating set for $A\ot A$ used in
the definition of $\Delta^r$.)
\end{proof}

\begin{lem}\label{inverse}  Let $M$ be an
$A\ot A$-module, considered to be an $A$-module via $\Delta:
A\rightarrow A\ot A$. Then 
\begin{itemize}
\item[(i)] $(\Delta^r)^{-1} V_{A\ot A}(M) = V_A(M)$.
\item[(ii)] $(\Delta_*)^{-1} V^c_{A\ot A}(M) = V^c_{A}(M)$.
\end{itemize}
\end{lem}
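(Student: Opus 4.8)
The plan is to prove (i) first using the rank variety description, and then deduce (ii) from (i) by transporting along the isomorphisms $\psi_A$, $\psi_{A\ot A}$ between rank and cohomological support varieties, using the commutative square of Lemma~\ref{commutes}. For (i), the point is simply to unwind what restriction along $\Delta\colon A\to A\ot A$ does on the level of the quantum cyclic shifted subalgebras. Fix $\ul=[\lambda_1:\cdots:\lambda_n]\in\bP^{n-1}$. The computation preceding the lemma shows that $\Delta(\tau_{\ul}(t))=\tau_{(\lambda_1,\lambda_1,\ldots,\lambda_n,\lambda_n)}(t)$ inside $A\ot A$, i.e. the composite
\[
k[t]/(t^\ell)\xrightarrow{\ \tau_{\ul}\ }A\xrightarrow{\ \Delta\ }A\ot A
\]
equals $\tau_{\Delta^r\ul}$, the quantum shifted subalgebra of $A\ot A$ attached to the point $\Delta^r\ul\in\bP^{2n-1}$. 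Hence, for an $A\ot A$-module $M$ viewed as an $A$-module via $\Delta$, the restriction $M{\downarrow_{\langle\tau_{\ul}(t)\rangle}}$ (computed over $A$) coincides with the restriction of $M$ (over $A\ot A$) to $\langle\tau_{\Delta^r\ul}(t)\rangle$. So $\ul\in V_A(M)$ iff this common restriction is non-projective iff $\Delta^r\ul\in V_{A\ot A}(M)$. One must be slightly careful about the $G$-quotients in Definition~\ref{de:rank}: $V_A(M)$ lives in $\bP^{n-1}/G$ and $V_{A\ot A}(M)$ in $\bP^{2n-1}/(G\times G)$, and $\Delta^r$ is the induced map on quotients of the diagonal embedding $\bP^{n-1}\hookrightarrow\bP^{2n-1}$; the displayed equality of restrictions together with the $G$-invariance statement of \cite[Lemma 2.5(ii)]{PvW09} shows $\Delta^r$ is well-defined on quotients and that the fibre of $\Delta^r$ over a point of $V_{A\ot A}(M)$, intersected with the image, is exactly $V_A(M)$. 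This gives $(\Delta^r)^{-1}V_{A\ot A}(M)=V_A(M)$.

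For (ii), apply $\psi_A^{-1}$ to $V^c_A(M)$ and $\psi_{A\ot A}^{-1}$ to $V^c_{A\ot A}(M)$: since $\psi_A$, $\psi_{A\ot A}$ are homeomorphisms and identify cohomological support with rank support (the main theorem of \cite{PvW09}), we have $\psi_A(V_A(M))=V_A^c(M)$ and $\psi_{A\ot A}(V_{A\ot A}(M))=V_{A\ot A}^c(M)$. By Lemma~\ref{commutes}, $\psi_{A\ot A}\circ\Delta^r=\Delta_*\circ\psi_A$, so $\Delta_*=\psi_{A\ot A}\circ\Delta^r\circ\psi_A^{-1}$ as maps $V_A^c\to V_{A\ot A}^c$. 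Therefore
\[
(\Delta_*)^{-1}V^c_{A\ot A}(M)
=\psi_A\bigl((\Delta^r)^{-1}\psi_{A\ot A}^{-1}V_{A\ot A}^c(M)\bigr)
=\psi_A\bigl((\Delta^r)^{-1}V_{A\ot A}(M)\bigr)
=\psi_A(V_A(M))=V_A^c(M),
\]
where the middle equality is part (i). This proves (ii).

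The only real subtlety — the step I expect to be the main obstacle to write cleanly — is the bookkeeping with the group quotients in part (i): making sure that $\Delta^r$ descends to the quotients, that ``restriction along $\Delta$ of an $A\ot A$-module, then to $\langle\tau_{\ul}(t)\rangle$'' genuinely agrees with restriction of the $A\ot A$-module to $\langle\tau_{\Delta^r\ul}(t)\rangle$ as subalgebras (this is immediate once one identifies the two composite algebra maps $k[t]/(t^\ell)\to A\ot A$, but it needs to be stated), and that taking preimages commutes with passing to $G$- versus $(G\times G)$-orbits. Everything else is a direct translation through the homeomorphisms already established, with no new input beyond Lemma~\ref{commutes} and the main theorem of \cite{PvW09}.
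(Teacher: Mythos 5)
Your argument is correct and follows essentially the same route as the paper: part (i) from the identity $\Delta(\tau_{\ul}(t))=\tau_{\Delta^r\ul}(t)$ giving the equivalence of projectivity on restriction, and part (ii) by transporting (i) through the commutative square of Lemma~\ref{commutes} using that $\psi_A$ and $\psi_{A\ot A}$ are homeomorphisms. The extra care you take with the $G$- versus $(G\times G)$-quotient bookkeeping (descent of $\Delta^r$ via the diagonal $G\hookrightarrow G\times G$, invoking \cite[Lemma 2.5(ii)]{PvW09}) is valid and indeed implicit in the paper's terser statement.
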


\begin{proof}
(i) This follows from the definitions since
$$
  \tau_{(\lambda_1,\lambda_1,\ldots,\lambda_m,\lambda_m)}(t)
   = \Delta(\tau_{(\lambda_1,\ldots,\lambda_m)} (t)).
$$
Thus  $M{\downarrow^{A\ot A}_{k\langle \tau_{(\lambda_1,
\lambda_1,\ldots,\lambda_m,\lambda_m)}(t)\rangle}}$ is projective if, and only if,
$M{\downarrow^{A}_{k\langle \tau_{(\lambda_1,
\ldots,\lambda_m)}(t)\rangle}}$ is projective.

(ii) This follows from (i) and Lemma \ref{commutes}, since $\psi_A$ and
$\psi_{A\ot A}$ are homeomorphisms.
\end{proof}

The following result is a straightforward consequence of the K\"unneth
Theorem, valid for any finite dimensional Hopf algebra $A$ satisfying the
hypothesis of Theorem~\ref{same}, that is the
varieties of a module and of its dual are the same.

\begin{lem}\label{dp}
Let $M,N$ be  $A$-modules. Then
\[ \Spec \coh^*(A,k)/I_A(M) \times \Spec \coh^* (A,k)/I_A(N) \simeq 
\Spec \coh^*(A\ot A,k)/I_{A\ot A}(M\ot N) 
\]
\end{lem}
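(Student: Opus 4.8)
The plan is to identify $I_{A\ot A}(M\ot N)$ with the ideal $I_A(M)\ot \coh^*(A,k) + \coh^*(A,k)\ot I_A(N)$ inside $\coh^*(A\ot A,k)\simeq \coh^*(A,k)\ot\coh^*(A,k)$, since then the displayed isomorphism of schemes follows formally: for ideals $\mathfrak a\subset S$ and $\mathfrak b\subset T$ one has $\Spec(S/\mathfrak a)\times\Spec(T/\mathfrak b)\simeq\Spec\big((S\ot T)/(\mathfrak a\ot T + S\ot\mathfrak b)\big)$. So the real content is the computation of the annihilator ideal on the $A\ot A$ side.

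First I would apply the K\"unneth theorem to the $\Ext$-algebras: for $A\ot A$-modules of the form $M\ot N$ (external tensor products, where $M,N$ are $A$-modules), there is an isomorphism of $\coh^*(A\ot A,k)$-modules
\[
  \Ext^*_{A\ot A}(M\ot N,\, M\ot N)\ \simeq\ \Ext^*_A(M,M)\ot \Ext^*_A(N,N),
\]
compatible with the decomposition $\coh^*(A\ot A,k)\simeq \coh^*(A,k)\ot\coh^*(A,k)$, where the first tensor factor acts on the first $\Ext$-factor and the second on the second. Here I must be careful about \emph{which} action of $\coh^*$ is used: on the $A$-side we use the action given by tensoring on the right by $M$ (respectively $N$) followed by Yoneda composition, as in Definition~\ref{de:cohom}, and the K\"unneth isomorphism is equivariant for the corresponding action on $A\ot A$. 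Granting this, the annihilator of a tensor product of modules over a tensor product of (graded-commutative Noetherian) rings is $\Ann_S(\text{first})\ot T + S\ot\Ann_T(\text{second})$, i.e.
\[
  I_{A\ot A}(M\ot N)\ =\ I_A(M)\ot\coh^*(A,k)\ +\ \coh^*(A,k)\ot I_A(N),
\]
and the scheme-theoretic statement follows.

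The main obstacle I anticipate is the equivariance of the K\"unneth isomorphism for the \emph{chosen} (non-symmetric) action of $\coh^*$ — the action via $-\ot M$ followed by Yoneda composition is not the most symmetric one, and one needs to check that under K\"unneth it corresponds to the expected componentwise action on $\coh^*(A,k)\ot\coh^*(A,k)$. This is where the hypothesis that $V_A^c(M)=V_A^c(M^\#)$ (equivalently, Theorem~\ref{same}) enters: it lets us replace the "right tensor" action by the symmetric cup-product action up to the same annihilator ideal, after which the K\"unneth compatibility with cup products is the standard statement. A secondary, purely bookkeeping, point is matching the ordering of the degree-$2$ generators of $\coh^*(A\ot A,k)$ with the factorization $\coh^*(A,k)\ot\coh^*(A,k)$, exactly as flagged in the proof of Lemma~\ref{commutes}; once that ordering is fixed, the identification of ideals is immediate and the rest is the elementary commutative-algebra fact about $\Spec$ of a tensor product of quotient rings.
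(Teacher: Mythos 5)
Your proposal is correct and follows essentially the same route as the paper. The paper phrases the argument via a commutative diagram of ring maps from cohomology into $\Ext$-algebras, using Künneth on the vertical arrows and invoking Theorem~\ref{same} to pass from $M\ot k\ot -$ to $-\ot M\ot k$ on the bottom row; since the annihilator of $\Ext^*_A(M,M)$ under the structure-map action is exactly the kernel of that ring map, your explicit identity $I_{A\ot A}(M\ot N) = I_A(M)\ot\coh^*(A,k) + \coh^*(A,k)\ot I_A(N)$ (valid over a field because $\bar S\ot\bar T \hookrightarrow \End_k(M)\ot\End_k(N)\hookrightarrow\End_k(M\ot N)$) is just the same statement made more explicit. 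The two points you flag — equivariance of Künneth for the chosen non-symmetric action, handled by Theorem~\ref{same}, and the reordering of the degree-2 generators — are exactly the two points the paper addresses.
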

\begin{proof}
The following diagram
induces isomorphisms on varieties, as we explain below.

\vspace{0.1in}
\begin{changemargin}{-0.5cm}{-1cm} 
\noindent
$
\begin{xy}*!C\xybox{
\xymatrix@=0.9pc{
\Ext^*_A(k,k)\ot \Ext^*_A(k,k) \ar[rrrr]^{( M\ot  - )\ot ( - \ot N)} 
   \ar[dd]^{\simeq}  &&&&
  \Ext^*_A(M,M)\ot \Ext^*_A(N,N) \ar[dd]^{\simeq}\\
	&&&&\\
  \Ext^*_{A\ot A}(k,k)\ar@/^/[rr]^-{  M\ot k \ot - } 
    \ar@/_/[rr]_-{- \ot M\ot k} &&
   \Ext_{A\ot A}^*(M\ot k, M\ot k) \ar[rr]^-{ - \ot k\ot N}
   && \Ext^*_{A\ot A}(M\ot N,
    M\ot N)
}}
\end{xy}
$
\end{changemargin}
\vspace{0.1in}

\noindent 
The vertical maps are given by the K\"unneth Theorem.
The variety $V^c_A(M)\times V^c_A(N)$ is defined by the kernel of
the top horizontal map, since $M\ot -$ gives rise to the same
variety as $-\ot M$ by Theorem~\ref{same}. 
The variety $V^c_{A\ot A}(M\ot N)$ is defined by the kernel of
the bottom horizontal map. 
The middle horizontal map is part of a commuting diagram together with
the top horizontal map, by 
definition of the K\"unneth isomorphisms: 
These are simply different ways of viewing the same tensor product
of complexes, since $k$ is the trivial module.
Now $M\ot k\ot -$ gives rise to the same variety as $ - \ot M\ot k$,
again by Theorem~\ref{same}.
Therefore the bottom horizontal map
gives rise to the same variety as the 
middle horizontal map. 
\end{proof}

\begin{proof}[Proof of Theorem \ref{thm:tensor}]  It is more convenient to work with affine varieties  for this proof.   
Since $\coh^*(A,k)$ is graded, connected, and generated in degree 2, the ideal $I_A(M)$  for any  module $M$ is homogeneous, and the field $k$ is algebraically closed, we have 
\begin{equation}
\label{eq:Gm} 
V_A^c(M) = \left[\Spec\coh^*(A,k)/I_A(M)  - \{0\}\right]/  k^* 
\end{equation} 
where $\{0\}$  corresponds to the irrelevant ideal $\coh^{>0}(A,k)$.   Hence, it suffices to  show that $\Spec \coh^*(A,k)/I_A(M\otimes N) = \Spec \coh^*(A,k)/I_A(M) \cap \Spec \coh^*(A,k)/I_A(N)$. 

Recall that the map $\Delta^*:\coh^*(A,k) \ot \coh^*(A,k) \to \coh^*(A,k)$ induces the diagonal map on $\Spec$. 
Hence, \begin{eqnarray*}
\Delta_*^{-1}(\Spec \coh^*(A,k)/I_A(M) \times
   \Spec \coh^* (A,k)/I_A(N)) \\
    =  \Spec \coh^*(A,k)/I_A(M) \cap
   \Spec \coh^* (A,k)/I_A(N) .
   \end{eqnarray*}
By Lemma \ref{inverse}(ii) (which applies to affine varieties thanks to the observation \eqref{eq:Gm}), 
\[\label{eq2}\Spec \coh^*(A,k)/I_A(M\ot N)
  = (\Delta_*)^{-1} (\Spec \coh^*(A\ot A,k)/I_{A\ot A}(M\ot N)).
\]By Lemma \ref{dp}, and a reordering of the basis of $\coh^2(A\ot A,k)$, the latter equals 
\[\Delta_*^{-1}(\Spec \coh^*(A,k)/I_A(M) \times
   \Spec \coh^* (A,k)/I_A(N)).\]
Combining with the first equation, we conclude 
\[\Spec \coh^*(A,k)/I_A(M\otimes N) = \Spec \coh^*(A,k)/I_A(M) \cap \Spec \coh^*(A,k)/I_A(N).
\] 
Passing to projective varieties by \eqref{eq:Gm}, we get the desired property: 
\[V_A^c(M\otimes N) = V_A^c(M) \cap V^c_A(N).\]
\end{proof}

\begin{rem}
One consequence of the theorem involves 
modules of the form $V(\underline{\lambda})$, defined in \cite[\S2]{PvW09}
for each $\underline{\lambda} \in {\mathbb{P}}^{n-1}$:
It is shown there that the variety of $V({\underline{\lambda}})$ is
simply $\{{\underline{\lambda}}\}$. 
Let $M$ be an $A$-module. Then
$\ul \in V_A(M)$ if and only if $V(\ul)\ot M$ is not projective.
To see this, note that 
by Theorem \ref{thm:tensor}, since $V_A(V(\ul)) = \{\ul\}$, we have
$\ul\in V_A(M)$ if and only if 
$$
  V_A(V(\ul)\ot M) = V_A(V(\ul))\cap V_A(M) = \{\ul\} \neq \emptyset ,
$$
which happens if and only if $V(\ul)\ot M$ is not projective.
\end{rem}

\section{Classification of thick tensor ideals of $\stmod A$} 
\label{ideal}

The proof of the classification is based on the following general strategy (which can be traced to \cite{Ho}, see also \cite{Nee}) 
used recently in \cite{CI}. Let $A$ be any finite dimensional Hopf algebra such that finitely generated $A$-modules  have a support variety theory, denoted $V_A(M)$, satisfying the usual properties (such as in \cite[Prop.~4.3]{PvW09})  plus two additional hypotheses:
\begin{itemize}
\item[(i)] {\em (Realization)} For every closed subset $V$ in $V_A(k)$, 
there exists $M\in \!\! \mod A$ such that $V_A(M) = V$.
\item[(ii)] {\em (Thick subcategory lemma)}  Let $M$, $N$ be finitely generated $A$-modules. If $V_A(M) \subset V_A(N)$, then  $M \in \Thick^{\otimes}_A(N)$. 
\end{itemize}
In this case, the classification theorem holds.  We implement this strategy in the proof of Theorem~\ref{thm:class}. The realization property for our theory of support varieties holds by \cite[Cor.\ 4.5]{PvW09}, and the ``Thick subcategory lemma" is proved 
below as Proposition~\ref{thick_lemma}.

The following lemma is straightforward from the definitions (see also \cite[Lemma~2.2]{BIK}).
\sloppy{

}

\begin{lem}
\label{lem:functor}  Let $\cF : \cA \to \cB$ be an exact functor between small 
triangulated categories $\cA$ and $\cB$, and 
let $X$ be an object in $\cA$.  Then for any $Y \in \Thick_\cA(X)$, $\cF(Y)  \in  \Thick_{\cB}(\cF(X))$.
\end{lem}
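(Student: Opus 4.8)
The statement to prove is Lemma~\ref{lem:functor}: an exact functor $\cF\colon\cA\to\cB$ of small triangulated categories sends $\Thick_\cA(X)$ into $\Thick_\cB(\cF(X))$. The natural approach is to let $\cC$ denote the full subcategory of $\cA$ consisting of all objects $Y$ such that $\cF(Y)\in\Thick_\cB(\cF(X))$, and to show that $\cC$ is a thick subcategory of $\cA$ containing $X$; then by minimality of $\Thick_\cA(X)$ we get $\Thick_\cA(X)\subseteq\cC$, which is exactly the claim.

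First I would note that $X\in\cC$ trivially, since $\cF(X)\in\Thick_\cB(\cF(X))$. Next I would verify that $\cC$ is closed under the operations that generate a thick subcategory. It is closed under shifts: if $Y\in\cC$ then $\cF(\Sigma^{\pm 1}Y)\simeq\Sigma^{\pm1}\cF(Y)$ because $\cF$ is exact, and $\Thick_\cB(\cF(X))$ is closed under shifts. It is closed under cones: given a triangle $Y_1\to Y_2\to Y_3\to\Sigma Y_1$ in $\cA$ with $Y_1,Y_2\in\cC$, applying $\cF$ yields a triangle $\cF(Y_1)\to\cF(Y_2)\to\cF(Y_3)\to\Sigma\cF(Y_1)$ in $\cB$; since $\cF(Y_1),\cF(Y_2)\in\Thick_\cB(\cF(X))$ and a thick subcategory is triangulated, $\cF(Y_3)\in\Thick_\cB(\cF(X))$, so $Y_3\in\cC$. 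The same argument, rotating the triangle, handles the other vertices, so $\cC$ is a triangulated subcategory. Finally $\cC$ is closed under direct summands: if $Y\simeq Y'\oplus Y''\in\cC$, then $\cF(Y)\simeq\cF(Y')\oplus\cF(Y'')$ (an exact functor is additive and preserves biproducts), and since $\Thick_\cB(\cF(X))$ is closed under summands, $\cF(Y')\in\Thick_\cB(\cF(X))$, i.e.\ $Y'\in\cC$.

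Having checked that $\cC$ is a thick subcategory of $\cA$ containing $X$, the definition of $\Thick_\cA(X)$ as the smallest such subcategory gives $\Thick_\cA(X)\subseteq\cC$. Hence for any $Y\in\Thick_\cA(X)$ we have $\cF(Y)\in\Thick_\cB(\cF(X))$, which is the assertion of the lemma.

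\textbf{Main obstacle.} There is essentially no obstacle here: the proof is the standard ``the preimage of a thick subcategory under an exact functor is thick'' argument, and every step is immediate from the axioms of a triangulated category together with additivity and exactness of $\cF$. The only point requiring a modicum of care is bookkeeping with the rotation of triangles when verifying closure under cones for \emph{all three} vertices (not just the cone), and recording that additive functors preserve finite direct sums so that closure under summands transfers correctly.
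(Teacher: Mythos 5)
Your proof is correct and is the standard ``preimage of a thick subcategory is thick'' argument; the paper itself gives no proof, merely noting the lemma is straightforward from the definitions and pointing to \cite{BIK}, so your write-up supplies exactly the argument the authors had in mind.
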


\begin{lem}\label{lem:ind} Let $M \in \stmod A$.  Then $\Thick^{\otimes}_A(M) = \Thick^{\otimes}_A(M{\downarrow_R{\uparrow^A}})$.
\end{lem}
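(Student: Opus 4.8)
We want to show that for any $M\in\stmod A$, the thick tensor ideals $\Thick^{\otimes}_A(M)$ and $\Thick^{\otimes}_A(M{\downarrow_R\uparrow^A})$ coincide. The key structural input is Lemma~\ref{lem:simples}(i), which gives $M{\downarrow_R\uparrow^A}\simeq\bigoplus_{\chi\in\Irr G}(M\ot S_\chi)$. I would prove the two inclusions separately.

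\medskip
\noindent\textbf{The easy inclusion $\Thick^{\otimes}_A(M{\downarrow_R\uparrow^A})\subseteq\Thick^{\otimes}_A(M)$.} Each summand $M\ot S_\chi$ lies in $\Thick^{\otimes}_A(M)$ by the very definition of a thick tensor ideal (tensoring $M$ with the object $S_\chi\in\stmod A$). Since a thick subcategory is closed under finite direct sums, the direct sum $\bigoplus_{\chi}(M\ot S_\chi)\simeq M{\downarrow_R\uparrow^A}$ lies in $\Thick^{\otimes}_A(M)$. As $\Thick^{\otimes}_A(M{\downarrow_R\uparrow^A})$ is the smallest thick tensor ideal containing $M{\downarrow_R\uparrow^A}$, the inclusion follows.

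\medskip
\noindent\textbf{The reverse inclusion $\Thick^{\otimes}_A(M)\subseteq\Thick^{\otimes}_A(M{\downarrow_R\uparrow^A})$.} By the same reasoning it suffices to show $M\in\Thick^{\otimes}_A(M{\downarrow_R\uparrow^A})$, and in fact it is enough to show $M$ is a direct summand of $M{\downarrow_R\uparrow^A}$ (a thick subcategory, in particular a thick tensor ideal, is closed under summands). Indeed, $M\ot S_e\simeq M$ with $e$ the trivial character is one of the summands displayed by Lemma~\ref{lem:simples}(i). Alternatively — and this is the more robust route, since one can use the unit/counit of the (induction, restriction) adjunction — the composite $M\xrightarrow{\eta} M{\downarrow_R\uparrow^A}\xrightarrow{\varepsilon} M$ of unit and counit maps is, up to a unit scalar, multiplication by $|G|$, which is invertible in $k$ since the characteristic of $k$ does not divide $\ell$ and hence does not divide $|G|=\ell^n$; this exhibits $M$ as a split summand of the induced-restricted module. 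Either way, $M\in\Thick^{\otimes}_A(M{\downarrow_R\uparrow^A})$, and the reverse inclusion follows.

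\medskip
\noindent\textbf{Main obstacle.} There is essentially no obstacle here: both inclusions are formal once Lemma~\ref{lem:simples}(i) is in hand, the only mild point being to record that $M$ is a genuine direct summand of $M{\downarrow_R\uparrow^A}$ (which is immediate from that lemma applied to the trivial character, or from the splitting of the adjunction maps using invertibility of $|G|$ in $k$). The statement will be used in the next section to reduce questions about $\Thick^{\otimes}_A(M)$ to the induced module $M{\downarrow_R\uparrow^A}$, which decomposes nicely and whose variety, by Lemma~\ref{lem:res-ind}, equals $V_A^c(M)$.
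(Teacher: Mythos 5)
Your proof is correct and follows essentially the same approach as the paper: decompose $M{\downarrow_R\uparrow^A}\simeq\bigoplus_{\chi}(M\ot S_\chi)$ via Lemma~\ref{lem:simples}(i), get one inclusion from closure of tensor ideals under $-\ot S_\chi$ and finite sums, and the other from the trivial-character summand. The alternative adjunction-splitting remark is a nice sanity check but is not needed and is slightly loosely stated (the map $M\to M{\downarrow_R\uparrow^A}$ is not literally a unit of the $(\uparrow^A,\downarrow_R)$ adjunction, though the averaging section does exist since $|G|$ is invertible).
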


\begin{proof}
By Lemma \ref{lem:simples}, $M{\da_R\ua^A}\simeq \oplus_{\chi\in\Irr G}(M\ot S_{\chi})$.
Since $M$ is a direct summand of $M{\da_R\ua^A}$ (corresponding to the trivial 
character of $G$), $M \in\Thick^{\ot}_A(M{\da_R\ua^A})$.
Conversely, since $M\ot S_{\chi}\in\Thick_A^{\ot}(M)$ for each $\chi\in\Irr G$,
we find that $M{\da_R\ua^A}\in\Thick^{\ot}_A(M)$.
Hence, $\Thick^{\otimes}_A(M) = \Thick^{\otimes}_A(M{\downarrow_R{\uparrow^A}})$.
\end{proof}

The following proposition is a version of the ``thick subcategory lemma" for $V_R(M)$. It is proven in \cite{CI} but the definition of a variety used there is different from ours. 
So the proof is based on identifying the variety of \cite{CI} with $V_R(M)$ as defined in this paper.  
\begin{prop}\label{thick:R}
Let $M$, $N$ be  $R$-modules. If $V_R(M) \subset V_R(N)$, then  $$M \in \Thick_R(N).$$ 
\end{prop}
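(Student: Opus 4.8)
The statement is the ``thick subcategory lemma'' for the truncated polynomial ring $R = k[X_1,\ldots,X_n]/(X_1^\ell,\ldots,X_n^\ell)$, and since $R$ is a complete intersection, this is exactly the kind of statement proved in \cite{CI}. The strategy is to invoke the machinery of \emph{loc.\ cit.}\ rather than reprove it: Carlson and Iyengar establish a thick subcategory classification for modules over a (graded) complete intersection, but using their own notion of support variety, defined via the action of the Koszul cohomology operators (or equivalently via the hypersurface/support data coming from $\Ext^*_R(k,k)$). So the work here is purely bookkeeping: identify the variety $V_R(M)$ of Definition~\ref{de:rank} (pushed forward through $\MA$) with the variety used in \cite{CI}, and then quote their theorem verbatim.

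\textbf{Key steps.} First I would recall that $R$ is a finite-dimensional graded complete intersection over $k$, so that $\Ext^*_R(k,k)$ is finitely generated over its subalgebra generated in degree $2$, which is a polynomial ring $k[\eta_1,\ldots,\eta_n]$ (the Bockstein/Koszul operators dual to the relations $X_i^\ell$); one has $\Specm$ of this polynomial ring identified with $\mathbb{A}^n$, and the support variety of an $R$-module in the sense of \cite{CI} is the closed subset cut out by the annihilator of $\Ext^*_R(M,M)$ as a module over $k[\eta_1,\ldots,\eta_n]$. Second, I would point out that this cohomological support is exactly $V_R(M)$ as we have defined it: indeed $V_R(M) := V_A(M\MA)$, which by the Avrunin--Scott theorem of \cite{PvW09} is the cohomological support variety $V_A^c(M\MA)$ computed via the action of $\coh^*(A,k) \simeq k[y_1,\ldots,y_n]$, and restriction along $R \hookrightarrow A$ carries the $y_i$ to (scalar multiples of) the $\eta_i$; alternatively, one can match the rank-variety description directly, as both are detected by restriction to cyclic ``shifted subgroups'' $k[t]/(t^\ell)$. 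Third, having identified the two variety theories, the Thick Subcategory Theorem of \cite{CI} (for complete intersections) gives: if $V_R(M) \subset V_R(N)$ then $M \in \Thick_R(N)$, where I note that $R$ is commutative so there is no distinction between the tensor ideal and the thick subcategory generated by $N$ at the level of this argument --- or more precisely, the $\Ext$-annihilator comparison is all that is needed to transport their conclusion.

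\textbf{Main obstacle.} The only genuine content is the identification of the two notions of support, and within that, the delicate point is conventions: \cite{CI} works with a possibly different grading normalization and with a complete intersection presented by a specific regular sequence, so one must check that the polynomial subalgebras of $\Ext^*_R(k,k)$ being used on the two sides literally coincide (up to a finite extension, which does not change $\Specm$ or closed subsets), and that the module $\Ext^*_R(M,M)$ carries the same action in both frameworks. I expect this to come down to the standard fact that for a graded hypersurface (iterated), the degree-$2$ part of $\Ext^*_R(k,k)$ is spanned by the Koszul operators associated to the relations, together with the observation that restriction of the cup-product action of $\coh^*(A,k)$ along the algebra map $R \to A$ recovers precisely this action --- the latter is essentially \cite[Lemma 2.5]{PvW09} and the naturality of support varieties under algebra maps. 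Once the dictionary is in place the proposition follows formally; I would also remark that one may instead give a self-contained argument along the lines of \cite{CI} using Koszul objects $M/\!\!/\,\eta$ and induction on the dimension of $V_R(M)\setminus V_R(N)$, but invoking \cite{CI} directly is cleaner.
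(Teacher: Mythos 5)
Your strategy coincides with the paper's: both identify the support variety theory for $R$-modules with the one used in Carlson--Iyengar (\cite{CI}) and then invoke their thick subcategory theorem for complete intersections directly, the only real content being the dictionary between the two notions of support. One small inaccuracy: the paper (following \cite{CI}) takes the support of $\Ext^*_R(k,M)$ over the degree-two polynomial subalgebra $k[\theta]$ of $\Ext^*_R(k,k)$, not of $\Ext^*_R(M,M)$; for finite-dimensional modules these have the same support so the argument survives, but the discrepancy is worth noting since the paper's route to the identification is a direct comparison of \cite[Definition~4.3]{CI} with \cite[Definition~2, p.~587]{PvW09} (using that degree-one nilpotents lie in every prime), rather than your detour through the Avrunin--Scott isomorphism for $A$ and restriction along $R\hookrightarrow A$.
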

\begin{proof}
The varieties in \cite{CI} are the same as we have defined up to projectivization: 
Let $M$  be an $R$-module,
and identify it with a chain complex  concentrated
in degree 0. In \cite{CI},   $V_R(M)$ is defined to be the support, in
$\Spec k[\theta]$, of $\Ext^*_R(k,M)$, where $k[\theta]$ is the subalgebra of
$\Ext^*_R(k,k)$ generated in degree 2 by the usual generators. The
action of $\Ext^*_R(k,k)$ is the Yoneda action.
This is the same as the definition in
\cite{PvW09}, where  the support variety $V^c_R(M)$ (with
$R=\Lambda$ in that paper) 
is the closed subset of $V^c_R(k)$ determined 
by the annihilator of $\Ext^*_R(k,M)$ (see \cite[Definition 2, p.\ 587]{PvW09}). 
Since
the nilpotent generators (in degree 1) are contained in any
prime ideal, this is equivalent to  \cite[Definition~4.3]{CI}. See also the remark
on equivalent definitions, \cite[p.~587]{PvW09}. 

The result now follows from \cite[Theorem 5.6]{CI}.
\end{proof}

\begin{prop}  
\label{thick_lemma}  Let $M,N$ be  $A$-modules. 
If $V_A(M)\subset V_A(N)$, then
$$M\in \Thick^\otimes_A(N).$$
\end{prop}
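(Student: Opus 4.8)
The plan is to reduce the statement for $A$-modules to the already-proven version for $R$-modules (Proposition~\ref{thick:R}), using the induction/restriction machinery assembled in Sections~\ref{se:dual} and~\ref{product}. First I would replace $M$ and $N$ by $M{\da_R\ua^A}$ and $N{\da_R\ua^A}$: by Lemma~\ref{lem:ind} we have $\Thick^\otimes_A(M) = \Thick^\otimes_A(M{\da_R\ua^A})$ and likewise for $N$, so it suffices to show $M{\da_R\ua^A} \in \Thick^\otimes_A(N{\da_R\ua^A})$. Moreover, by Corollary~\ref{cor:equal} (or directly Lemma~\ref{lem:res-ind}), the hypothesis $V_A(M)\subset V_A(N)$ is equivalent to $V_R(M{\da_R}) = V_A(M) \subset V_A(N) = V_R(N{\da_R})$.

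Next I would apply Proposition~\ref{thick:R} to the $R$-modules $M{\da_R}$ and $N{\da_R}$: since $V_R(M{\da_R}) \subset V_R(N{\da_R})$, we get $M{\da_R} \in \Thick_R(N{\da_R})$. Now I would push this containment forward along the induction functor $\Ind_R^A = -{\ua^A}: \stmod R \to \stmod A$. This functor is exact (it sends short exact sequences of $R$-modules to short exact sequences of $A$-modules, since $A$ is free as a right $R$-module), so it is a triangulated functor. By Lemma~\ref{lem:functor}, $M{\da_R\ua^A} \in \Thick_A(N{\da_R\ua^A})$. Finally, $\Thick_A(N{\da_R\ua^A}) \subset \Thick^\otimes_A(N{\da_R\ua^A}) = \Thick^\otimes_A(N)$, the last equality again by Lemma~\ref{lem:ind}. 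Chaining these, $M \in \Thick^\otimes_A(M{\da_R\ua^A}) = \Thick^\otimes_A(M{\da_R\ua^A})$ — more precisely $M$ is a summand of $M{\da_R\ua^A} \in \Thick_A(N{\da_R\ua^A}) \subset \Thick^\otimes_A(N)$, so $M \in \Thick^\otimes_A(N)$ as desired.

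The one point requiring care — and the likely main obstacle — is verifying that the induction functor $-{\ua^A}$ descends to an exact (triangulated) functor on the \emph{stable} categories, so that Lemma~\ref{lem:functor} genuinely applies. This needs that $-{\ua^A}$ sends projective $R$-modules to projective $A$-modules (true, as $A$ is free over $R$), hence is well-defined on $\stmod R \to \stmod A$, and that it commutes with $\Omega$ up to natural isomorphism and sends triangles to triangles; both follow from exactness of $A\otimes_R -$ on the module categories together with the fact that induction of a projective is projective. Once that is in place the argument is just the diagram-chase above, and I would present it compactly without belaboring the routine verifications. An alternative, should one prefer to avoid even this mild check, is to note that $-{\ua^A}$ on $\mod R \to \mod A$ together with the stabilization functors forms a commuting square, so a thick-subcategory containment upstairs maps into one downstairs directly; but the cleanest write-up is the one via Lemma~\ref{lem:functor}.
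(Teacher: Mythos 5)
Your argument is correct and follows essentially the same path as the paper's proof: pass to restriction via Corollary~\ref{cor:equal}, invoke Proposition~\ref{thick:R}, push forward through the exact induction functor using Lemma~\ref{lem:functor}, recover $M$ as a summand of $M{\da_R\ua^A}$ via Lemma~\ref{lem:simples}, and conclude with Lemma~\ref{lem:ind}. Your extra remarks spelling out why $-{\ua^A}$ descends to a triangulated functor on stable categories (freeness of $A$ over $R$, preservation of projectives) are a reasonable elaboration of a point the paper leaves implicit, but they do not constitute a different approach; the opening reduction replacing $M,N$ by $M{\da_R\ua^A}, N{\da_R\ua^A}$ is also harmless but unnecessary, since your final summand argument already closes the loop directly.
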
  
\begin{proof} 
By Corollary~\ref{cor:equal},  the inclusion $V_A(M)\subset V_A(N)$ implies 
 $V_R(M{\da_R})\subset V_R(N{\da_R})$.  Therefore, 
$M{\da_R} \, \in \Thick_R(N {\da_R})$ by Proposition~\ref{thick:R}. Since induction $(-){\ua^A}: \ \stmod R \to \stmod A$ is exact, we conclude that  
$M{\da_R\ua^A} \in \Thick_A(N {\da_R\ua^A})$ by Lemma~\ref{lem:functor}. By   
Lemma~\ref{lem:simples},  $M$ is a direct summand of $M{\downarrow_R\uparrow^A}$, and so 
$M \in \Thick_A(M{\da_R\ua^A})$.  Finally, $\Thick_A(N {\da_R\ua^A}) \subset \Thick_A^\otimes (N {\da_R\ua^A}) = \Thick^\otimes_A(N)$  by  Lemma~\ref{lem:ind}.  Putting these together, we conclude that $M \in \Thick^\otimes_A(N)$. 
\end{proof} 

Recall that a subset $V$ of a topological space $X$ is called {\it specialization closed} if for any $W \subset V$, the closure of $W$ belongs to $V$. 
Equivalently, a specialization closed subset is a union of closed subsets.

\begin{thm}\label{se:class}
There is an inclusion-preserving one-to-one correspondence

\vspace{0.1in} \noindent
$$
 \left\{\begin{array}{cc} 
     \mbox{thick tensor ideals } \cC \\
     \mbox{ in } \stmod A \end{array}\right\} 
   \longleftrightarrow
    \left\{ \begin{array}{cc}\mbox{specialization closed subsets } V \\
       \mbox{ of }{V_A (k)= \mathbb{P}}^{n-1}/G \end{array}\right\}
$$
given by maps $\phi,\psi$ 
where $$\phi(\cC) = \bigcup_{M\in \cC}V_A(M) \ \ \ 
   \mbox{ and } \ \ \ \psi(V)= \{M : V_A(M)\subset V\}.$$
\end{thm}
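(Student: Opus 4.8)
The plan is to verify that the maps $\phi$ and $\psi$ are well defined, mutually inverse, and inclusion-preserving, following the now-standard argument with all the pieces assembled earlier in the paper. First I would check that $\phi(\cC)$ is genuinely specialization closed: it is a union of the closed sets $V_A(M)$ for $M\in\cC$, hence automatically a union of closed subsets, which is exactly the definition just recalled; no further work is needed here. Next I would check that $\psi(V)$ is a thick tensor ideal for $V$ specialization closed. Closure under shifts and summands is immediate from Proposition~\ref{properties}(i)--(ii); the two-out-of-three property for triangles is Proposition~\ref{properties}(iii); and the tensor ideal property is the decisive input: if $V_A(M)\subset V$ and $B$ is arbitrary, then $V_A(B\ot M)=V_A(B)\cap V_A(M)\subset V_A(M)\subset V$ and likewise $V_A(M\ot B)\subset V$, by the Tensor Product Theorem~\ref{thm:tensor}, so $B\ot M, M\ot B\in\psi(V)$. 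That $\phi$ and $\psi$ are inclusion-preserving is formal from the definitions.

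Then I would establish the two composite identities. For $\psi(\phi(\cC))\supseteq\cC$: if $M\in\cC$ then $V_A(M)\subset\phi(\cC)$, so $M\in\psi(\phi(\cC))$. For the reverse inclusion $\psi(\phi(\cC))\subseteq\cC$: take $M$ with $V_A(M)\subset\phi(\cC)=\bigcup_{N\in\cC}V_A(N)$. Because $V_A(M)$ is a closed subset of the noetherian space $\mathbb{P}^{n-1}/G$ it has finitely many irreducible components, and by Proposition~\ref{properties}(i) we may reduce to the case where $V_A(M)$ is irreducible; then some $V_A(N)$ with $N\in\cC$ contains a dense (hence cofinal under specialization) portion of $V_A(M)$, and since $\phi(\cC)$ is specialization closed we get $V_A(M)\subset V_A(N)$. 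Now Proposition~\ref{thick_lemma} (the Thick Subcategory Lemma) gives $M\in\Thick^\otimes_A(N)\subseteq\cC$, using that $\cC$ is a thick tensor ideal. The slightly delicate point — that a finite union of the $V_A(N)$ already suffices to cover $V_A(M)$, or that one of them contains an irreducible component — I would handle by the standard argument: write $V_A(M)=\bigcup_i Z_i$ into irreducible components, pick for each $i$ a point $x_i\in Z_i$ not lying in any other component, find $N_i\in\cC$ with $x_i\in V_A(N_i)$, and observe $\overline{\{x_i\}}=Z_i\subset V_A(N_i)$ since $V_A(N_i)$ is closed; then $M_0:=\bigoplus_i N_i\in\cC$ has $V_A(M)\subset V_A(M_0)$, and apply Proposition~\ref{thick_lemma}.

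For $\phi(\psi(V))=V$: the inclusion $\phi(\psi(V))\subseteq V$ is immediate since every $M\in\psi(V)$ has $V_A(M)\subset V$. For $\phi(\psi(V))\supseteq V$, I would use the Realization property recalled at the start of Section~\ref{ideal} (from \cite[Cor.~4.5]{PvW09}): given $x\in V$, the closure $\overline{\{x\}}$ is a closed subset of $V$ (as $V$ is specialization closed), so there is a module $M$ with $V_A(M)=\overline{\{x\}}\subset V$; then $M\in\psi(V)$ and $x\in V_A(M)\subset\phi(\psi(V))$.

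The main obstacle is the inclusion $\psi(\phi(\cC))\subseteq\cC$, specifically the reduction from ``$V_A(M)$ is covered by the union of the $V_A(N)$, $N\in\cC$'' to ``$V_A(M)$ is contained in a \emph{single} $V_A(N)$ with $N\in\cC$,'' so that Proposition~\ref{thick_lemma} can be applied; this is where specialization-closedness of $\phi(\cC)$, noetherianity of the variety, and the direct-sum property Proposition~\ref{properties}(i) all get used together, and it is the only genuinely non-formal step in the argument.
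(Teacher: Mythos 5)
Your proof is correct and matches the paper's approach almost exactly: verify well-definedness of both maps, show they are mutually inverse, with the single non-formal step being the reduction from a possibly infinite union $\bigcup_{N\in\cC}V_A(N)$ to a finite one so that Proposition~\ref{thick_lemma} can be applied. Where the paper cites Neeman's Lemma~1.3 for that finite subcover, you supply the argument inline; the one place to tighten your wording is that you must take $x_i$ to be the \emph{generic point} of the irreducible component $Z_i$ in $\Proj\coh^*(A,k)$ (not merely a point of $Z_i$ avoiding the other components), since the key step $\overline{\{x_i\}}=Z_i$ fails if $x_i$ is a closed point.
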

\begin{proof} 

First note that these maps have images as expected: If $\cC$ is a thick tensor
ideal, then  
by definition $\phi(\cC)$ is specialization closed, since 
$V_A(M)$ is closed for each $M$.
If $V$ is a specialization closed subset, then
by the properties of support varieties (Proposition~\ref{properties}), $\psi(V)$ is a thick tensor ideal. 

We wish to show that $\phi,\psi$ are inverse maps. We first show that $\phi\circ\psi(V) = V$. 
By definition, 
$$\phi\circ\psi(V) = \bigcup_{M \in \psi(V)}
   V_A(M),$$ 
where the union is taken over all $M$ for which $V_A(M)\subset V$. Hence,
$\phi\circ\psi(V) \subset V$. For the other
containment, note that any point $x$ in $V$ is contained in a closed set in 
$V$,
which is realized as the variety of some module $M$ (see \cite[Corollary~4.5]{PvW09}). Thus, $M$ is in
$\psi(V)$ by definition, and, therefore, $x \in \phi\circ\psi(V)$.

It remains to establish the equality $\psi\circ\phi(\cC) = \cC$.  The easy part of
this is $\cC\subset \psi\circ\phi(\cC)$, which follows from the definitions. So we only need to show the inclusion 
\[\psi \circ \phi(\cC) \subset \cC\]
for any thick tensor ideal $\cC$ in $\stmod A$. 
Let $N \in  \psi \circ \phi(\cC)$, that is, $V_A(N) \subset \bigcup_{M\in \cC}V_A(M)$.  
Since $\coh^*(A,k)$ is Noetherian, there exist finitely many $M_i \in \cC$ such that
\[
V_A(N) \subset \bigcup\limits_{i=1}^n V_A(M_i)
\]
(see \cite[Lemma 1.3]{Nee}).  Hence, $V_A(N)\subset V_A(M_1\oplus M_2\oplus\cdots
\oplus M_n)$. 
By  Proposition~\ref{thick_lemma}, 
$N \in \Thick^\otimes_A(M_1\oplus M_2 \oplus \ldots \oplus M_n)$. 
Since $M_1\oplus M_2\oplus\cdots\oplus M_n \in \cC$ and $\cC$ is a thick tensor ideal,
this implies $N\in \cC$.
\end{proof}

\begin{rem} As an immediate corollary to the classification, there is a one-to-one correspondence

\vspace{0.1in} \hspace{.5cm}
$\{ \ \mbox{thick tensor ideals in } \stmod A \ \}  \longleftrightarrow  \{ \ \mbox{thick subcategories in } \stmod R \ \}.$

\end{rem}

We briefly discuss the situation for the quantum complete intersection algebra $R_q$.  We note that $R_q$ is a Frobenius algebra, which follows from the fact that $R_q \rtimes G \simeq A$ (see also \cite[Cor.~5.8]{FMS}).  We consider the stable module category $\stmod R_q$. For an $R_q$-module $M$, and $g \in G$, denote by ${^g}M$ the $R_q$-module which is the ``twist" of $M$ by $g$: $\ {}^gM = M $ as a vector space, and the action is given by $ Y_i (m ) := (g^{-1}\cdot Y_i) m = 
\chi_i(g^{-1})Y_im $. 
(Recall that $\chi_i(g_j)= q^{\delta_{ij}}$.) 
Sending $M$ to ${}^gM$ induces an action of $G$ on $\stmod R_q$. We say that a subcategory $\cC$ of $\stmod R_q$ is $G$-{\it stable} if $M \in \cC$ implies ${}^gM \in \cC$ for all $g \in G$. 

For an $R_q$-module $M$, we define 
\[V_{R_q}(M) :=V_A(M{\uparrow^A}).
\]

\begin{cor}
There is an inclusion-preserving one-to-one correspondence 
$$\left\{ \begin{array}{cc}\mbox{ $G$-stable thick subcategories } \\
     \mbox{ in } \stmod R_q \end{array} \right\} \longleftrightarrow
   \left\{\begin{array}{cc} \mbox{ specialization closed subsets } \\
      \mbox{ of } \mathbb{P}^{n-1}/G \end{array}\right\} $$
induced by the maps $\phi, \psi$ defined as in Theorem~\ref{se:class} with $V_A$ replaced by $V_{R_q}$.  
\end{cor}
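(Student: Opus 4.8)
The strategy is to deduce the correspondence for $\stmod R_q$ from Theorem~\ref{se:class} by relating $G$-stable thick subcategories of $\stmod R_q$ to thick tensor ideals of $\stmod A$ via the adjoint functors of restriction and induction along $R_q \hookrightarrow R_q \rtimes G \simeq A$. First I would set up the two functors: induction ${\uparrow^A} = A \otimes_{R_q} - : \stmod R_q \to \stmod A$ and restriction ${\downarrow_{R_q}}: \stmod A \to \stmod R_q$. Both are exact on the stable categories (since $A$ is free as a left and right $R_q$-module, so both functors are exact and preserve projectives), hence triangulated. The key structural input is the analogue of Lemma~\ref{lem:simples}: for an $R_q$-module $M$ one has $M{\uparrow^A}{\downarrow_{R_q}} \simeq \bigoplus_{g \in G} {}^gM$ (Mackey-type decomposition for a crossed product by a group of order prime to the characteristic), and for an $A$-module $L$ one has $L{\downarrow_{R_q}}{\uparrow^A} \simeq \bigoplus_{\chi \in \Irr G}(L \otimes S_\chi)$, which is Lemma~\ref{lem:simples}(i) already in hand.

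**Defining the correspondence.** Given a $G$-stable thick subcategory $\cD \subset \stmod R_q$, I would send it to the thick tensor ideal $\cC := \{\, L \in \stmod A : L{\downarrow_{R_q}} \in \cD \,\}$; conversely a thick tensor ideal $\cC \subset \stmod A$ goes to $\cD := \{\, M \in \stmod R_q : M{\uparrow^A} \in \cC \,\}$. One checks: (a) these are well-defined — $\cC$ is thick because ${\downarrow_{R_q}}$ is triangulated and detects direct summands, and it is a tensor ideal because for $L \in \cC$ and any $B \in \stmod A$ we have $(B \otimes L){\downarrow_{R_q}}$ a summand of a direct sum of twists ${}^g(L{\downarrow_{R_q}})$ by the Mackey decomposition applied after Lemma~\ref{lem:simples}, using $G$-stability of $\cD$; and $\cD$ is $G$-stable because $({}^gM){\uparrow^A} \simeq M{\uparrow^A}$. (b) The two assignments are mutually inverse: starting from $\cD$, a module $M$ lies in the recovered subcategory iff $M{\uparrow^A}{\downarrow_{R_q}} = \bigoplus_g {}^gM$ lies in $\cD$, which by $G$-stability and thickness is equivalent to $M \in \cD$ (using that $M$ is a summand of $\bigoplus_g {}^gM$, the $g=e$ term); starting from $\cC$, use Lemma~\ref{lem:ind} ($\Thick^\otimes_A(L) = \Thick^\otimes_A(L{\downarrow_R{\uparrow^A}})$ — here with $R_q$ in place of $R$) to see $L \in \cC \iff L{\downarrow_{R_q}}{\uparrow^A} \in \cC$.

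**Finishing via Theorem~\ref{se:class}.** Now compose with the bijection of Theorem~\ref{se:class}. Under $\cC \mapsto \phi(\cC) = \bigcup_{L \in \cC} V_A(L)$, the thick tensor ideal associated to $\cD$ corresponds to the specialization-closed set $\bigcup_{M \in \cD} V_A(M{\uparrow^A}) = \bigcup_{M\in\cD} V_{R_q}(M)$, which is exactly $\phi(\cD)$ with $V_A$ replaced by $V_{R_q}$; and on the other side $\psi(V) = \{L \in \stmod A : V_A(L) \subset V\}$ pulls back along ${\uparrow^A}$ to $\{M : V_A(M{\uparrow^A}) \subset V\} = \{M : V_{R_q}(M) \subset V\}$, using Corollary~\ref{cor:equal} in the form $V_{R_q}(M) = V_A(M{\uparrow^A})$ (the definition), together with the fact that $V_A(L{\downarrow_R{\uparrow^A}}) = V_A(L)$. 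Inclusion-preservation is immediate from the formulas. The composite is therefore the desired correspondence.

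**Main obstacle.** The routine parts are the exactness/triangulated structure of the functors and the bookkeeping with Theorem~\ref{se:class}. The genuine work is establishing that the assignment really lands in (and exhausts) \emph{$G$-stable} thick subcategories and that the tensor-ideal condition on the $A$-side matches $G$-stability on the $R_q$-side — i.e. proving the Mackey-type decomposition $M{\uparrow^A}{\downarrow_{R_q}} \simeq \bigoplus_{g\in G}{}^gM$ and checking carefully that $B \otimes (A \otimes_{R_q} M)$, restricted back to $R_q$, is built from the twists ${}^g(M{\downarrow_{R_q}}\otimes \text{stuff})$ so that $G$-stability of $\cD$ suffices to keep it inside. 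This interplay between the $G$-action by twisting on $\stmod R_q$ and the tensor action of $\stmod A$ is where one must be most careful; everything else is formal.
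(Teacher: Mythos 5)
Your overall strategy---first establish a bijection between $G$-stable thick subcategories of $\stmod R_q$ and thick tensor ideals of $\stmod A$ via restriction and induction, then compose with Theorem~\ref{se:class}---is a genuinely different route from the paper's. The paper does not construct such a bijection of categories; instead it mirrors the proof of Theorem~\ref{se:class} directly, reducing $\psi\circ\phi(\cC)\subset\cC$ to Proposition~\ref{thick_lemma} on $\stmod A$ and then restricting, using the identity $\Thick^\ot_A(N\ua^A)=\Thick_A\bigl(\bigoplus_{\chi}(N\ua^A\ot S_\chi)\bigr)$ together with $(N\ua^A\ot S_\chi)\da_{R_q}\simeq\bigoplus_{g\in G}{}^gN$. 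Your composition-of-bijections bookkeeping (that $\phi,\psi$ transport correctly under $\ua^A$, $\da_{R_q}$, using $V_A(L)=V_{R_q}(L\da_{R_q})$) is fine, and the ``mutually inverse'' check is essentially right.

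However, there is a genuine gap in the one step you yourself flag as the main obstacle: showing that $\cC:=\{L\in\stmod A: L\da_{R_q}\in\cD\}$ is a tensor ideal. You assert that $(B\ot L)\da_{R_q}$ is ``a summand of a direct sum of twists ${}^g(L\da_{R_q})$ by the Mackey decomposition applied after Lemma~\ref{lem:simples}.'' This does not work as stated. A Mackey-type or projection-formula argument of the form $B\ot(N\ua^A)\simeq(B\da_{R_q}\ot N)\ua^A$ is unavailable here: $R_q$ is not a Hopf subalgebra of $A$ (indeed $\Delta(Y_i)=Y_i\ot g_1\cdots g_{i-1}+g_1\cdots g_i\ot Y_i$ lands in neither $R_q\ot A$ nor $A\ot R_q$), so there is no tensor identity, and ``$B\da_{R_q}\ot M$'' is not even an $R_q$-module. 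Moreover ``direct summand of a direct sum'' is too strong. The correct statement is that $(B\ot L)\da_{R_q}$ has a \emph{filtration} whose subquotients are twists ${}^gL\da_{R_q}$, which follows by filtering $B$ by the one-dimensional simples $S_\chi$ and observing directly from the coproduct that $(S_\chi\ot L)\da_{R_q}\simeq{}^{g}(L\da_{R_q})$ and $(L\ot S_\chi)\da_{R_q}\simeq{}^{g'}(L\da_{R_q})$ for suitable $g,g'\in G$ depending on $\chi$; thickness then closes the argument. So your approach can be repaired, but the justification you give is not valid, and the paper's route via Proposition~\ref{thick_lemma} avoids ever having to establish the tensor-ideal property of a pullback category.
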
 

\begin{proof}  The proof is very similar to the proof of Theorem~\ref{se:class}.  We discuss two points where differences occur. 

First, for any $R_q$-module $M$, we have $M{\uparrow^A} \simeq ({}^gM){\uparrow^A}$, and, therefore, $V_{R_q}(M) = V_{R_q}({}^gM)$  for any $g \in G$. The definition of $\psi$ now implies that for $V \subset \mathbb{P}^{n-1}/G$ a subset closed under specialization, $\psi(V)$ is a $G$-stable thick subcategory of $\stmod R_q$. 
 
As in the proof of Theorem~\ref{se:class}, it now suffices to show that for any $G$-stable thick subcategory $\cC \subset \stmod R_q$, we have $\psi(\phi(\cC)) \subset \cC$.  Let $M$ be an $R_q$-module such that $V_{R_q}(M) \subset \phi(\cC)$.  By the Noetherian argument as in the proof of Theorem~\ref{se:class} and the definition of the variety $V_{R_q}(-)$, there is a module $N \in \cC$ such that $V_A(M{\uparrow^A})\subset V_A(N{\uparrow^A})$, which
implies that $M{\uparrow^A}\in \Thick^{\ot}_A(N{\uparrow^A})$ by Proposition~\ref{thick_lemma}. Since the
$S_{\chi}$'s are all simple $A$-modules
up to isomorphism, we have $\Thick^{\ot}_A(N{\uparrow^A}) = \Thick_A(\oplus_{\chi} (N{\uparrow^A} \ot S_{\chi}))$.
This implies that $M{\uparrow^A\downarrow_{R_q}}\in\Thick_{R_q}(\oplus_{\chi}(N{\uparrow^A\ot
S_{\chi}){\downarrow_{R_q}}})$, however $(N{\uparrow^A\ot S_{\chi}){\downarrow_{R_q}}} \simeq \displaystyle{\oplus_{g \in G} } {}^gN$.
Since $N \in \cC$ and $\cC$ is $G$-stable, we conclude that $M{\uparrow^A\downarrow_{R_q}} \in \cC$. Therefore, $M \in \cC$ since $M$ is a direct summand of $M{\uparrow^A\downarrow_{R_q}}$. 
\end{proof} 

\begin{rem} Do there exist non-$G$-stable thick subcategories in $\stmod R_q$?
Or   do $N$, ${}^gN$ always generate the same thick subcategory of $\stmod R_q$, for any $N$?
In general, one does not expect this, however in this case, $N$ and $ {}^gN$ have
the same variety, and in case $q=1$, it is true.
This is related to a question that Benson and Green asked in \cite{BG04}:
The parameters were a bit different, but in our case their question
would translate as follows. 
Do there exist modules $M$, $N$, neither of which is free on
restriction to the subalgebra generated by $Y_1$,
and for which $\Ext^n_{R_q}(M,N)=0$ for all $n>0$? 
\end{rem}

\end{document}